\newcommand{\lb}{\left}
\newcommand{\rb}{\right}
\newcommand{\R}{\mathbb{R}}
\newcommand{\ind}{\mathbbm{1}}
\newcommand{\wt}{\widetilde}
\newcommand{\wh}{\widehat}
\newcommand{\ubar}{\underline}
\newcommand{\veps}{\varepsilon}
\newcommand{\bP}{\mathbb{P}}
\newcommand{\E}{\mathbb{E}}
\theoremstyle{plain}
\newtheorem{thm}{Theorem}[section]
\newtheorem{prop}[thm]{Proposition}
\newtheorem{coro}[thm]{Corollary}
\newtheorem{lem}[thm]{Lemma}
\newtheorem*{conj}{Conjecture}
\theoremstyle{definition}
\newtheorem{defn}[thm]{Definition}
\newtheorem{rmk}[thm]{Remark}
\title[Sensitivity to Initial Data for the Supercooled Stefan Problem]{Sensitivity to Initial Data for Physical and Minimal Solutions of the Supercooled Stefan Problem}
\author{Graeme Baker}
\date{\today}
\address{Department of Statistics, Columbia University, New York, NY 10027, USA.}
\email{g.baker@columbia.edu}
\begin{document}

\begin{abstract}
We address the problem of well-posedness for physical and minimal solutions to a probabilistic reformulation of the supercooled Stefan problem by investigating the sensitivity of these solutions to changes in the initial data.
We show that the solution map for physical solutions is continuous under perturbations to the initial condition $X_{0-}$ (in the weak sense of probability measures), provided that $X_{0-}$ admits a unique physical solution.
Furthermore, we show continuous dependency of the solution map for minimal solutions when the data is shifted to the right; however, continuity of this solution map is shown to fail at $X_{0-}$ for shifts to the left unless uniqueness of physical solutions holds.
As a result, we show that the question of whether the solution map for minimal solutions is continuous at $X_{0-}$ is equivalent to the question of whether the given data $X_{0-}$ admits a unique physical solution.
\end{abstract}

\maketitle

\section{Introduction}

We consider the McKean--Vlasov equation
\begin{equation}\label{eq:McK-V}
\left\{
\begin{aligned}
    X_t&=X_{0-}+B_t-\Lambda_t,\quad t\ge 0\\
    \tau &= \inf \{t\ge 0: X_t\le 0\},\\
    \Lambda_t&=\alpha\bP(\tau\le t),\quad t\ge 0,
\end{aligned}
\right.
\end{equation}
where $\alpha>0$, the initial condition $X_{0-}$ is a random variable on $\R$, and $B$ is a Brownian motion independent of $X_{0-}$. In this article, we study \emph{physical} and \emph{minimal} solutions to this problem, and the sensitivity of these solutions to perturbations in the initial data. Throughout the work, we let $D(I)$ denote the space of c\`adl\`ag functions from a given interval $I$ to $\R$. \emph{A priori}, we are concerned with solutions $\Lambda\in D([0,\infty))$ to \eqref{eq:McK-V} which are in the set
\begin{align}
M=\{\ell\in D([0,\infty)):\ell_{0-}=0,\text{ and }\ell\text{ is increasing}\}.
\end{align}

The system \eqref{eq:McK-V} arises as a probabilistic reformulation of the one-phase supercooled Stefan problem in one dimension: a free boundary problem for the heat equation which models the solidification of a supercooled liquid. In that setting, the function $t\mapsto\Lambda_t$ describes the time evolution of the boundary between liquid and solid \cite{delarue_global_2022}. For general initial conditions, the corresponding free boundary partial differential equation (PDE) may exhibit finite-time blowup of the derivative of the boundary, and admit multiple weak solutions (in the sense of PDE). Equation \eqref{eq:McK-V} has also been used as a model for systemic risk in interconnected banking systems, where $\tau$ denotes the time of default for a representative bank whose assets at time $t$ are given by $X_t$, and $\Lambda_t$ is the proportion of banks which have defaulted by time $t$ (see \cite{nadtochiy_particle_2019}, as well as extensions such as \cite{nadtochiy_mean_2019,hambly_spde_2019,baker_singular_2023}, among others).
In the case of either application, equation \eqref{eq:McK-V} can be seen as the mean-field limit of the interacting particle system
\begin{equation}\label{eq:finite}
\left\{
\begin{aligned}
    X_t^{i}&=X_{0-}^i+B_t^i-L_t^N,\quad t\ge 0\\
    \tau^i &= \inf \{t\ge 0: X_t^i\le 0\},\\
    L_t^N&=\frac{\alpha }{N}\sum_{i=1}^N \ind_{\{\tau^i\le t\}} ,\quad t\ge 0,
\end{aligned}
\right.
\end{equation}
where particles $(X^i)_{1\le i\le N}$ have independent and identically distributed initial conditions $X_{0-}^i\stackrel{d}{=}X_{0-}$ and $(B^i)_{1\le i\le N}$ is a family of independent Brownian motions. 

\medskip

Depending on the initial distribution $X_{0-}$, both \eqref{eq:McK-V} and \eqref{eq:finite} may admit multiple solutions (see, for instance, \cite[Subsections 2.1 and 3.1]{delarue_particle_2015}). There is a natural way of resolving jump cascades which occur in \eqref{eq:finite} when the hitting time of one particle triggers one or more other particles to hit (for specifics, we refer the reader to the detailed discussion in \cite[Section 3]{delarue_particle_2015}). The condition used to resolve the cascades in the finite problem \eqref{eq:finite} informs which jump sizes are admissible in the limiting problem and gives rise to the following definition.

\begin{defn}
A solution $\Lambda\in M$ of \eqref{eq:McK-V} is deemed \emph{physical} provided that
\begin{align}\label{eq:phys}
\Lambda_{t}-\Lambda_{t-}=\inf\{x>0:\alpha\bP(\tau\ge t, X_{t-}\in(0,x])<x\}
\end{align}
is satisfied at all times $t\ge 0$.
\end{defn}

The physical jump condition was introduced in \cite{delarue_particle_2015} in the context of mean field networks of integrate-and-fire neurons, and later for the supercooled Stefan problem in \cite{delarue_global_2022}.
For the purpose of applications, the discontinuities of $\Lambda$ represent finite-time blowup for the free boundary in the supercooled Stefan problem, or an event where a macroscopic proportion of banks simultaneously default in the context of systemic risk. The physical jump condition \eqref{eq:phys} ensures that when a discontinuity of $\Lambda$ occurs, it is the smallest possible choice of jump among c\`adl\`ag solutions \cite[Proposition 1.2]{hambly_mckean--vlasov_2018}.

\medskip

Provided that some assumptions are made on the regularity of the initial condition $X_{0-}$, equation \eqref{eq:McK-V} is known to admit a unique physical solution. For instance, when $X_{0-}$ possesses a bounded density on $[0,\infty)$ which changes monotonicity finitely often, then the physical solution is unique and the regularity of it is fully characterized by \cite[Theorems 1.1 and 1.4]{delarue_global_2022}. Uniqueness of the physical solutions has also been established using a weaker assumption in \cite{mustapha_well-posedness_2023}, which allows for oscillatory initial densities such as those which behave like $(1+\sin 1/x)/2$ near the origin. As of writing, we do not know of an initial condition which gives rise to multiple physical solutions. Of course, we do know that such an initial condition would have to be irregular enough to invalidate the assumptions of \cite{delarue_global_2022,mustapha_well-posedness_2023}.

\medskip

With regards to the supercooled Stefan problem as a physical model, the physical solutions of \cite{delarue_global_2022} give a notion of global-in-time solutions in the presence of possible blow-ups, and these solutions are unique under the aforementioned technical assumptions.
A mathematical model for a physical phenomenon is well-posed provided that (i) there exists (appropriately defined) solutions, (ii) these solutions are unique, and (iii) they depend continuously on initial data \cite[page 7]{evans_partial_2010}. 
In Section \ref{sec:phys}, we show that the following holds
\begin{thm}
\label{thm:phys}
Let $\Lambda\in M$ solve \eqref{eq:McK-V} with initial data $X_{0-}$ such that $\E|X_{0-}|<\infty$. Let $(\Lambda^n)_{n\ge 1}$ be a sequence of physical solutions to \eqref{eq:McK-V}, where the corresponding initial conditions $(X_{0-}^n)_{n\ge 1}$ all have finite expectation. Suppose that $X_{0-}^n\to X_{0-}$ weakly and $\Lambda^n\to \Lambda$ in $(D([-1,\infty)),\operatorname{M1})$. Then $\Lambda$ satisfies
the physical jump condition
\begin{align}
\Lambda_{t}-\Lambda_{t-}=\inf\{x>0:\alpha\bP(\tau\ge t, X_{t-}\in(0,x])<x\}
\end{align}
at all $t\ge 0$. 
\end{thm}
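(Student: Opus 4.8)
The plan is to exploit that $\Lambda$ is \emph{already} assumed to solve \eqref{eq:McK-V}, so that only the jump condition remains, and to reduce that condition to a single one-sided inequality which passes through the limit. By \cite[Proposition 1.2]{hambly_mckean--vlasov_2018}, among all c\`adl\`ag solutions the physical jump is the smallest one; hence any solution $\Lambda$ automatically satisfies $\Lambda_t-\Lambda_{t-}\ge\inf\{x>0:\alpha\bP(\tau\ge t,X_{t-}\in(0,x])<x\}$, and the physical jump condition at $t$ is \emph{equivalent} to the reverse bound, i.e.\ to
\[
\alpha\,\bP\big(\tau\ge t,\ X_{t-}\in(0,x]\big)\ \ge\ x\qquad\text{for every }x\in\big(0,\Lambda_t-\Lambda_{t-}\big).
\]
Writing $\Phi_t(x):=\alpha\bP(\tau\ge t,X_{t-}\in(0,x])$ and $\Phi^n_s(x):=\alpha\bP(\tau^n\ge s,X^n_{s-}\in(0,x])$ (both nondecreasing, right-continuous, vanishing at $0$), the target is $\Phi_t\ge\mathrm{id}$ on $(0,\Lambda_t-\Lambda_{t-})$ for all $t$, while physicality of $\Lambda^n$ says precisely $\Phi^n_s\ge\mathrm{id}$ on $(0,\Lambda^n_s-\Lambda^n_{s-})$ for all $s$ (a vacuous statement at continuity points of $\Lambda^n$). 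So everything comes down to transporting this bound from the approximants to $\Lambda$.

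Pass to a probability space carrying all the $X^n_{0-},X_{0-}$ with $X^n_{0-}\to X_{0-}$ a.s.\ (Skorokhod) and a common Brownian motion $B$; since $\operatorname{M1}$-convergence is preserved under adding a common continuous path and an a.s.\ convergent constant, $X^n=X^n_{0-}+B-\Lambda^n\to X_{0-}+B-\Lambda=X$ a.s.\ in $(D([-1,\infty)),\operatorname{M1})$. The central lemma I would prove is: \emph{for each $t\ge0$ there are jump times $t_n\to t$ of $\Lambda^n$ (or short clusters of jump times collapsing to $t$) with}
\[
\Lambda^n_{t_n}-\Lambda^n_{t_n-}\ \longrightarrow\ \Lambda_t-\Lambda_{t-},\qquad
\alpha\operatorname{Law}\!\big(X^n_{t_n-}\ind_{\{\tau^n\ge t_n\}}\big)\big|_{(0,\infty)}\ \longrightarrow\ \alpha\operatorname{Law}\!\big(X_{t-}\ind_{\{\tau\ge t\}}\big)\big|_{(0,\infty)}
\]
weakly, the limiting sub-probability measure having distribution function $\Phi_t$ (here one uses that $\bP(\tau\ge t,X_{t-}=0)=0$ for solutions of \eqref{eq:McK-V}, so that restricting to $(0,\infty)$ is weakly continuous along the sequence). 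Granting the lemma, the target bound follows: for $x\in(0,\Lambda_t-\Lambda_{t-})$ a continuity point of $\Phi_t$ one has $x<\Lambda^n_{t_n}-\Lambda^n_{t_n-}$ for large $n$, hence $\Phi^n_{t_n}(x)\ge x$ by physicality of $\Lambda^n$, and $\Phi^n_{t_n}(x)\to\Phi_t(x)$ by weak convergence at the continuity point $x$; thus $\Phi_t(x)\ge x$, and this extends to all $x<\Lambda_t-\Lambda_{t-}$ by right-continuity of $\Phi_t$. As $t$ was arbitrary, $\Lambda$ is physical.

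To prove the lemma, start at continuity times: if $s$ is a continuity point of $\Lambda$ it is one of $X$ too (as $B$ is continuous), so $X^n_s\to X_s$ a.s.; and $\tau^n\to\tau$ a.s., because on $\{\tau>s\}$ the completed graph of $X$ over $[-1,s]$ is bounded away from $0$ — forcing $X^n>0$ there for large $n$ — while on $\{\tau<\infty\}$ every right-neighbourhood of $\tau$ contains points where $X<0$ (non-degeneracy of the Brownian driver), forcing $X^n<0$ near $\tau$ for large $n$. Hence $X^n_s\ind_{\{\tau^n>s\}}\to X_s\ind_{\{\tau>s\}}$ a.s.\ (off the null event $\{\tau=s\}$), so $\alpha\operatorname{Law}(X^n_s\ind_{\{\tau^n>s\}})\to\alpha\operatorname{Law}(X_s\ind_{\{\tau>s\}})$ weakly, and, since the atom at $0$ of the $n$-th measure equals $\Lambda^n_s=\alpha\bP(\tau^n\le s)\to\alpha\bP(\tau\le s)=\Lambda_s$, the restrictions to $(0,\infty)$ also converge. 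Letting $s\uparrow t$ through continuity points gives $X_s\ind_{\{\tau>s\}}\to X_{t-}\ind_{\{\tau\ge t\}}$ a.s.\ and (using $\bP(\tau\ge t,X_{t-}=0)=0$) convergence of the restricted laws to $\alpha\operatorname{Law}(X_{t-}\ind_{\{\tau\ge t\}})|_{(0,\infty)}$. A diagonal extraction over continuity points $s_k\uparrow t$ then yields indices $n_k$ along which $\alpha\operatorname{Law}(X^{n_k}_{s_k}\ind_{\{\tau^{n_k}>s_k\}})|_{(0,\infty)}\to\alpha\operatorname{Law}(X_{t-}\ind_{\{\tau\ge t\}})|_{(0,\infty)}$. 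Finally one passes from $s_k$ to a jump time $t_{n_k}$ of $\Lambda^{n_k}$ realising the $\operatorname{M1}$-jump of $\Lambda$ at $t$: moving from time $s_k$ to $t_{n_k}-$ costs only the Brownian increment on $[s_k,t_{n_k}]$, of size $O(\sqrt{t_{n_k}-s_k})\to0$, plus the mass absorbed in $(s_k,t_{n_k})$, which all sits within that vanishing distance of the origin and is therefore invisible to the restriction to $(0,\infty)$; and the total increase of $\Lambda^{n_k}$ over $(s_k,t']$, with $t'\downarrow t$ a continuity point of $\Lambda$, converges to $\Lambda_t-\Lambda_{t-}$ by $\operatorname{M1}$-convergence of monotone functions.

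The main obstacle is exactly this last consolidation step. In the $\operatorname{M1}$ topology the jump of $\Lambda$ at $t$ need \emph{not} be approximated by a single jump of $\Lambda^n$: it may be built from a cluster of jumps (possibly interlaced with steep continuous rises) collapsing onto $t$, and one must then show that the configuration governing the whole cluster still converges to $\alpha\operatorname{Law}(X_{t-}\ind_{\{\tau\ge t\}})|_{(0,\infty)}$ and that the cluster's aggregate jump equals $\Lambda_t-\Lambda_{t-}$. This is where the two structural inputs are indispensable: the monotonicity of the $\Lambda$'s — for which $\operatorname{M1}$-convergence is equivalent to convergence at continuity points together with convergence of the generalised inverses, so that clusters and steep rises of the approximants are genuinely controlled — and the physicality of each $\Lambda^n$, which guarantees that every jump occurring inside a cluster is a \emph{completed} cascade (and, with the local regularity of physical solutions away from blow-up, rules out a macroscopic increase occurring continuously in vanishing time). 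The remaining technical points — the a.s.\ convergence of the hitting times and the absence of boundary mass $\bP(\tau\ge t,X_{t-}=0)=0$ — are routine for solutions of \eqref{eq:McK-V} but need to be recorded.
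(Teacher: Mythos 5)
The reduction in your first paragraph is sound and matches the paper's final step: the a priori lower bound on the jump follows from \cite[Proof of Proposition 1.2]{hambly_mckean--vlasov_2018} (the paper cites the same), so only the one-sided bound $\Phi_t(x)\ge x$ on $(0,\Lambda_t-\Lambda_{t-})$ needs to be transported through the limit. The Skorokhod coupling, the a.s.\ convergence of hitting times via the crossing property, and the weak convergence of the sub-probability measures at continuity times are all fine, and are close in spirit to what the paper inherits from \cite[Proof of Theorem 6.4, Step 1]{cuchiero_propagation_2020}.

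The gap is in your central lemma, and you locate it correctly but resolve it incorrectly. You ask for jump times $t_n$ of $\Lambda^n$ with $\Lambda^n_{t_n}-\Lambda^n_{t_n-}\to\Lambda_t-\Lambda_{t-}$, so that the exact physical jump condition at $t_n$ can be invoked. But in the $\operatorname{M1}$ topology a jump of the limit need not be witnessed by \emph{any} jump of the approximants: $\Lambda^n$ can be entirely continuous near $t$, with the mass accumulating through a steepening continuous rise, and then the physical jump condition for $\Lambda^n$ is vacuous there. Your proposed fix --- that physicality of each $\Lambda^n$ ``rules out a macroscopic increase occurring continuously in vanishing time'' --- is not justified: the physical jump condition constrains jump sizes only, not rates of continuous increase, and the local regularity results for physical solutions you appeal to require hypotheses on the initial density that are absent here and in any case do not give a bound \emph{uniform over the sequence}. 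Indeed if such a uniform bound followed from physicality alone, one could dispense with the quantitative machinery entirely. This is precisely why the paper does not try to chase jump times of $\Lambda^n$. Instead it invokes \cite[Proof of Theorem 6.4, Step 2]{cuchiero_propagation_2020}, a quantitative $\veps$-window version of the physical condition: for physical $\Lambda^n$ and small $\veps$,
\begin{align}
\alpha\,\nu^n_{t-}\big([0,z+C\veps^{1/3}]\big)\ \ge\ z\big(1-C\veps^{1/3}\big)
\quad\text{whenever}\quad z<\Lambda^n_{t+\veps}-\Lambda^n_t-C\veps^{1/3},
\end{align}
with $C$ depending only on $\alpha$, $\E|X_{0-}|$ and $T$. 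This estimate applies whether $\Lambda^n$ jumps or rises continuously over $[t,t+\veps]$, and it is the tool that survives the $\operatorname{M1}$ limit (approximating $t$ from the left through common continuity points $t_m\uparrow t$, $t_m+\veps_m>t$, $\veps_m\downarrow 0$, and then applying Portmanteau). Without it, or an equivalent uniform estimate on continuous increase of physical solutions over small time windows, your lemma --- and hence the proof --- does not close.
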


\begin{rmk}\label{rmk:cheaptrick}
The convergence is stated in $(D([-1,\infty)),\operatorname{M1})$, where $\operatorname{M1}$ denotes the Skorokhod $\operatorname{M1}$ topology. We work on $(D([-1,\infty)),\operatorname{M1})$ rather than $(D([0,\infty)),\operatorname{M1})$ since the $\operatorname{M1}$ topology enforces continuity at the left endpoint. Throughout this work, we will use the embedding $\iota :D([0,\infty))\hookrightarrow D([-1,\infty))$ defined by $\iota f(x)=f(0-)$ for $x\in [-1,0)$ whenever $f\in D([0,\infty))$ (we will omit the symbol $\iota$). This gives us convergence at the left endpoint for free (c.f.~\cite[Remark 4.3]{cuchiero_propagation_2020}). 
\end{rmk}

With Theorem \ref{thm:phys}, we see that continuous dependence holds for the physical solution map when the initial condition $X_{0-}$ admits a unique physical solution. Therefore, we see that the question of well-posedness of physical solutions to \eqref{eq:McK-V} reduces to the question \emph{for which initial conditions does \eqref{eq:McK-V} admit a unique physical solution?} In this work, we highlight the centrality of this question for both the well-posedness of physical solutions and the well-posedness of minimal solutions, which we define next.

\medskip

Another notion of solution to equation \eqref{eq:McK-V}, known as the minimal solution and introduced in \cite{cuchiero_propagation_2020}, is arrived at as follows: first, let $\ell\in M$ be a candidate for the free boundary. Consider the problem
\begin{equation}\label{eq:Gammadef}
\left\{
\begin{aligned}
X^{\ell}_t=&X_{0-}+B_t- \ell_t,\quad t\ge 0\\
\tau^{\ell}=&\inf \{t\ge 0:X^{\ell}_t\le 0\}\\
\Gamma[\ell;X_{0-}]_t:=&\alpha\bP(\tau^{\ell}\le t), \quad t\ge 0.
\end{aligned}
\right.
\end{equation}
We see that the hunt for solutions to \eqref{eq:McK-V} amounts to finding fixed points of $\Gamma[~\cdot~;X_{0-}]$.
The operator $\Gamma[~\cdot~;X_{0-}]$ is continuous on $M$ by \cite[Proposition 2.1]{cuchiero_propagation_2020}, and it is easy to see that it is monotone in the sense that 
\begin{align}
\ell^1_t\le\ell^2_t\text{ for all }t\ge 0\implies \Gamma[\ell^1;X_{0-}]_t\le \Gamma[\ell^2;X_{0-}]_t\text{ for all }t\ge 0.
\end{align}

\medskip

For any solution $\Lambda$ of \eqref{eq:McK-V}, we must have $0\le \Lambda_t$ for all $t\ge 0$, as $\frac{1}{\alpha}\Lambda_t$ is the probability of some event. Hence
\begin{align}
\Gamma[0;X_{0-}]_t\le \Gamma[\Lambda;X_{0-}]_t=\Lambda_t,\quad t\ge 0,
\end{align}
since $\Lambda$ is a fixed point of $\Gamma[~\cdot~;X_{0-}]$. Define $\Gamma^n[~\cdot~;X_{0-}]$ recursively by
\begin{align}
\Gamma^n[~\cdot~;X_{0-}]:=\Gamma^{n-1}[\Gamma[~\cdot~;X_{0-}];X_{0-}],
\end{align}
for $n\ge 2$. Applying $\Gamma^n$ to the inequality $0\le \Lambda_t$ for all $t\ge 0$ yields 
\begin{align}\label{eq:gammaineq}
\Gamma^n[0;X_{0-}]_t\le \Gamma^n[\Lambda;X_{0-}]_t=\Lambda_t,\quad t\ge 0.
\end{align}
In \cite[Proposition 2.3]{cuchiero_propagation_2020}, it is shown that the limit $\ubar{\Lambda}:=\lim_{n\to \infty}\Gamma^n[0;X_{0-}]$ is a solution to \eqref{eq:McK-V}.
\begin{defn}
We call $\ubar{\Lambda}:=\lim_{n\to \infty}\Gamma^n[0;X_{0-}]$ the \emph{minimal} solution to \eqref{eq:McK-V}.
\end{defn}
Taking the limit of \eqref{eq:gammaineq}, we see that $\ubar{\Lambda}_t\le\Lambda_t$, for all $t\ge 0$ when $\Lambda$ is any other solution to \eqref{eq:McK-V}.
The minimality of $\ubar{\Lambda}$ among solutions to \eqref{eq:McK-V} can also be taken as the definition of the minimal solution. Either by construction, or by minimality, we have that the minimal solution is unique. We note the following important connection between minimal and physical solutions:
\begin{prop}[Theorem 6.5 in \cite{cuchiero_propagation_2020}]\label{prop:minisphys}
When $\E|X_{0-}|<\infty$, the minimal solution is a physical solution.
\end{prop}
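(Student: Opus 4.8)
The plan is to fix a time $t\ge 0$ and show that the jump $\ubar\Delta_t:=\ubar\Lambda_t-\ubar\Lambda_{t-}$ of the minimal solution coincides with the physical jump
\[
d^{*}_t:=\inf\bigl\{x>0:\ \alpha\bP(\tau\ge t,\ X_{t-}\in(0,x])<x\bigr\},
\]
where $X,\tau$ are built from $\ubar\Lambda$ via \eqref{eq:McK-V}; I would treat $t>0$ and indicate the analogous change at $t=0$. Write $g(x):=\alpha\bP(\tau\ge t,\ X_{t-}\in(0,x])$, a nondecreasing right-continuous function with $g(0)=0$. Two elementary observations come first: (i) $g(d^{*}_t)=d^{*}_t$ --- take $x_n\downarrow d^{*}_t$ with $g(x_n)<x_n$ to get $g(d^{*}_t)\le d^{*}_t$ by right-continuity, and use $g(x)\ge x$ on $[0,d^{*}_t)$ to get $g(d^{*}_t)\ge g(d^{*}_t-)\ge d^{*}_t$; and (ii) on $\{\tau\ge t\}$ one has $X_{t-}\ge 0$, and since $X_{t-}=X_{0-}+B_t-\ubar\Lambda_{t-}$ with $B_t$ admitting a density independent of $X_{0-}$, in fact $\bP(\tau\ge t,\ X_{t-}=0)=0$ when $t>0$. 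Because $\ubar\Lambda_\cdot=\alpha\bP(\tau\le\cdot)$ and $\Delta X_t=-\ubar\Delta_t$, one has $\ubar\Delta_t=\alpha\bP(\tau=t)=\alpha\bP(\tau\ge t,\ X_{t-}\le\ubar\Delta_t)$, which by (ii) reduces to $\ubar\Delta_t=g(\ubar\Delta_t)$.

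The bound $\ubar\Delta_t\ge d^{*}_t$ I would take from \cite[Proposition 1.2]{hambly_mckean--vlasov_2018}: once the law of $X_{t-}$ on $\{\tau\ge t\}$ is fixed, $d^{*}_t$ is the smallest jump admissible for a c\`adl\`ag solution (a strictly smaller jump would leave a surviving sub-population whose concentration at the origin precludes any c\`adl\`ag continuation). Since $\ubar\Lambda$ is such a solution, $\ubar\Delta_t\ge d^{*}_t$.

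For the reverse bound I would exploit minimality by a splicing argument. Assume $\ubar\Delta_t>d^{*}_t$, and build a competitor $\tilde\Lambda$ equal to $\ubar\Lambda$ on $[0,t)$, with a jump of only $d^{*}_t$ at $t$ (self-consistent, by (i)), and $\tilde\Lambda_{t+r}:=\tilde\Lambda_t+\hat\Lambda_r$ for $r>0$, where $\hat\Lambda$ is the minimal solution of \eqref{eq:McK-V} restarted from the post-jump survivors: initial law $\mu:=\nu/\nu(\R)$, parameter $\alpha':=\alpha\,\nu(\R)$, where $\nu(A):=\bP\bigl(X_{0-}+B_t-\tilde\Lambda_t\in A\cap(0,\infty),\ \tau\ge t\bigr)$. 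The hypothesis $\E|X_{0-}|<\infty$ enters exactly here, to make $\nu$ have finite first moment so the restarted problem is admissible and its minimal solution exists, by \cite[Proposition 2.3]{cuchiero_propagation_2020}. Using that $(B_{t+r}-B_t)_{r\ge 0}$ is a Brownian motion independent of the history up to $t$, one verifies that $\tilde\Lambda$ solves \eqref{eq:McK-V} with data $X_{0-}$: it agrees with $\ubar\Lambda$ on $[0,t)$; at time $t$, $\alpha\bP(\tilde\tau\le t)=\ubar\Lambda_{t-}+g(d^{*}_t)=\tilde\Lambda_t$ by (i)--(ii); and $\alpha\bP(t<\tilde\tau\le s)=\hat\Lambda_{s-t}$ for $s>t$ by the definition of $\hat\Lambda$. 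But then $\tilde\Lambda_t=\ubar\Lambda_{t-}+d^{*}_t<\ubar\Lambda_{t-}+\ubar\Delta_t=\ubar\Lambda_t$, contradicting $\ubar\Lambda\le\tilde\Lambda$, which holds by minimality of $\ubar\Lambda$. Hence $\ubar\Delta_t\le d^{*}_t$, and combining the two bounds $\ubar\Delta_t=d^{*}_t$ for every $t\ge 0$, i.e.\ $\ubar\Lambda$ satisfies the physical jump condition.

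The step I expect to demand real care is the verification that the spliced $\tilde\Lambda$ is a bona fide global solution of \eqref{eq:McK-V} --- essentially a clean ``restart lemma'': a c\`adl\`ag flow solving the fixed-point equation on $[0,t)$, making a self-consistent jump at $t$, and thereafter equal to a shift of the correctly rescaled minimal solution of the restarted problem is again a solution on $[0,\infty)$. This involves the strong Markov property at the deterministic time $t$, the rescaling of the surviving sub-probability measure into $(\mu,\alpha')$, and the bookkeeping at $t=0$ when $\bP(X_{0-}\le 0)>0$, though none of it is conceptually hard. I would deliberately avoid the tempting alternative of approximating $X_{0-}$ by regular data having a unique (hence minimal) physical solution and passing to the limit via Theorem \ref{thm:phys}, since stability of the minimal solution map under the relevant leftward approximations is itself equivalent to uniqueness of physical solutions --- the very phenomenon at issue.
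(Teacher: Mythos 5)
The paper does not prove Proposition \ref{prop:minisphys}; it is imported by citation from \cite{cuchiero_propagation_2020}, so there is no internal proof here to compare against. Judged on its own terms, your splicing argument is a plausible route and the key idea --- that a super-physical jump of $\ubar\Lambda$ at time $t$ can be undercut by making the (strictly smaller) physical jump and restarting from the surviving sub-population, contradicting minimality --- is sound. Two places need to be tightened.

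First, the justification of (ii) is too quick: $\{\tau\ge t\}$ is not independent of $B_t$, so ``$B_t$ admits a density independent of $X_{0-}$'' does not by itself give $\bP(\tau\ge t, X_{t-}=0)=0$. The fix is to condition at an earlier time: write $X_{t-}=X_{(t-\veps)-}+(B_t-B_{t-\veps})-(\ubar\Lambda_{t-}-\ubar\Lambda_{t-\veps})$, note $\{\tau\ge t\}\subset\{\tau\ge t-\veps\}\in\mathcal{F}_{t-\veps}$, and use that $B_t-B_{t-\veps}$ is Gaussian and independent of $\mathcal{F}_{t-\veps}$. Second --- and this is the more substantive gap --- you never verify that the spliced $\tilde\Lambda$ is right-continuous at $t$, which requires $\hat\Lambda_0=0$ for the restarted minimal solution. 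Without this, $\tilde\Lambda\notin M$ and the minimality comparison $\ubar\Lambda\le\tilde\Lambda$ does not apply. The fact does hold: $\mu$ is carried on $(0,\infty)$ by construction, so $\Gamma^n[0;\mu,\alpha']_0=0$ for every $n$ and $\hat\Lambda_0=0$, with $\hat\Lambda_{0+}=0$ then following from c\`adl\`ag-ness of $\hat\Lambda$. Equivalently, any nonnegative $z$ with $\alpha'\bP(Y_{0-}\le z)=z$ forces $d^*_t+z$ to be a fixed point of $g$, and because $d^*_t$ is the \emph{infimum} in \eqref{eq:phys}, there are $x$ arbitrarily close to $d^*_t$ from the right with $g(x)<x$, which rules out a positive physical jump of the restarted data at time $0$. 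Either way, this deserves an explicit line, since it is exactly where (i) is used in a non-cosmetic way. The remaining restart bookkeeping (that the conditional law of $X_{0-}+B_t-\tilde\Lambda_t$ given $\{\tilde\tau>t\}$ is $\mu$, that $(B_{t+r}-B_t)_{r\ge0}$ is independent of $\mathcal{F}_t\supset\{\tilde\tau>t\}$, and the rescaling to $\alpha'=\alpha\nu(\R)$) is routine, as you anticipate, but should be written out, as should the modification at $t=0$ when $\bP(X_{0-}\le0)>0$.
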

From \cite{cuchiero_propagation_2020}, there is the following conjecture concerning sensitivity to initial conditions for minimal solutions:
\begin{conj}[Conjecture 6.10 in \cite{cuchiero_propagation_2020}]
Let $X_{0-}$ be given with $\E|X_{0-}|<\infty$. For $x\in\R$, let $\ubar{\Lambda}^x$ denote the minimal solution to \eqref{eq:McK-V} with shifted initial condition $X_{0-}+x$. The map 
$x\mapsto \ubar{\Lambda}^x$ is continuous from $\R$ to $(D([-1,\infty)),\operatorname{M1})$.
\end{conj}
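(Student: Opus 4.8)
The plan is to exploit the monotone structure of the fixed-point operator $\Gamma[\,\cdot\,;X_{0-}]$ together with the minimality of $\ubar{\Lambda}^x$: this will yield continuity for rightward shifts unconditionally, and reduce leftward continuity to the uniqueness of physical solutions.

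\emph{Monotonicity and one-sided limits.} First I would couple all shifts on one probability space carrying $X_{0-}$ and $B$. For fixed $\ell\in M$ the path $t\mapsto (X_{0-}+x)+B_t-\ell_t$ is pathwise non-decreasing in $x$, so its hitting time of $(-\infty,0]$ is non-decreasing in $x$, and hence $x\mapsto\Gamma[\ell;X_{0-}+x]_t$ is non-increasing for every $t$. Combined with the monotonicity of $\Gamma$ in $\ell$ recorded above, the iterates $\Gamma^n[0;X_{0-}+x]$ are non-increasing in $x$, so $x\mapsto\ubar{\Lambda}^x=\lim_{n\to\infty}\Gamma^n[0;X_{0-}+x]$ is non-increasing pointwise in $t$. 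Thus, for each base point $x_0$, the one-sided pointwise limits $\ubar{\Lambda}^{x_0+}_t:=\lim_{x\downarrow x_0}\ubar{\Lambda}^x_t$ and $\ubar{\Lambda}^{x_0-}_t:=\lim_{x\uparrow x_0}\ubar{\Lambda}^x_t$ exist in $[0,\alpha]$ and satisfy $\ubar{\Lambda}^{x_0+}\le\ubar{\Lambda}^{x_0}\le\ubar{\Lambda}^{x_0-}$. A short interchange-of-monotone-limits argument shows $\ubar{\Lambda}^{x_0\pm}\in M$, and since these are monotone pointwise limits of increasing functions, $\ubar{\Lambda}^x\to\ubar{\Lambda}^{x_0\pm}$ in $(D([-1,\infty)),\operatorname{M1})$ by the standard characterisation of $\operatorname{M1}$ convergence of monotone functions (using the embedding of Remark \ref{rmk:cheaptrick} to handle the left endpoint).

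\emph{Rightward shifts.} Next I would let $x\downarrow x_0$ in the identity $\ubar{\Lambda}^x=\Gamma[\ubar{\Lambda}^x;X_{0-}+x]$ and use joint continuity of $\Gamma$ — continuity in $\ell$ is \cite[Proposition 2.1]{cuchiero_propagation_2020}, and continuity in the initial law, which here converges weakly with uniformly bounded first moments, is standard — to obtain $\ubar{\Lambda}^{x_0+}=\Gamma[\ubar{\Lambda}^{x_0+};X_{0-}+x_0]$, so that $\ubar{\Lambda}^{x_0+}$ solves \eqref{eq:McK-V} with data $X_{0-}+x_0$. Minimality then gives $\ubar{\Lambda}^{x_0}\le\ubar{\Lambda}^{x_0+}$, and with the reverse inequality from monotonicity I conclude $\ubar{\Lambda}^{x_0+}=\ubar{\Lambda}^{x_0}$; a squeeze upgrades this to $\ubar{\Lambda}^{x_n}\to\ubar{\Lambda}^{x_0}$ in $\operatorname{M1}$ for every $x_n\downarrow x_0$. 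This establishes continuity for shifts to the right, with no assumption on $X_{0-}$.

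\emph{Leftward shifts, and where it gets stuck.} The same limiting argument shows $\ubar{\Lambda}^{x_0-}$ solves \eqref{eq:McK-V} with data $X_{0-}+x_0$, but now minimality only reproduces the already-known $\ubar{\Lambda}^{x_0}\le\ubar{\Lambda}^{x_0-}$, so it does not close the argument. The extra structure I would bring in is that each $\ubar{\Lambda}^x$ is a physical solution (Proposition \ref{prop:minisphys}); since $\ubar{\Lambda}^x\to\ubar{\Lambda}^{x_0-}$ in $\operatorname{M1}$ with $X_{0-}+x\to X_{0-}+x_0$ weakly, Theorem \ref{thm:phys} upgrades $\ubar{\Lambda}^{x_0-}$ to a \emph{physical} solution. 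Hence if $X_{0-}+x_0$ admits a unique physical solution then $\ubar{\Lambda}^{x_0-}=\ubar{\Lambda}^{x_0}$ and leftward continuity holds at $x_0$; running this along the whole shifted family proves the conjecture under, for instance, the regularity hypotheses of \cite{delarue_global_2022, mustapha_well-posedness_2023}. The genuine difficulty — which I expect to be the main obstacle — is the converse direction needed to make continuity \emph{fail} whenever physical uniqueness fails: one must show $\ubar{\Lambda}^{x_0-}\gneq\ubar{\Lambda}^{x_0}$ under non-uniqueness. The natural route is to prove that $\ubar{\Lambda}^{x_0-}$ is the \emph{maximal} physical solution at $X_{0-}+x_0$ — then any physical $\Lambda'\gneq\ubar{\Lambda}^{x_0}$ forces $\ubar{\Lambda}^{x_0-}\ge\Lambda'\gneq\ubar{\Lambda}^{x_0}$ — or else a quantitative instability estimate showing that a small leftward shift forces a macroscopic jump of the minimal solution at a blow-up time, of size bounded below uniformly in the shift. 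Carrying this out identifies continuity of $x\mapsto\ubar{\Lambda}^x$ at $x_0$ with uniqueness of physical solutions at $X_{0-}+x_0$; since no non-unique example is currently known, this shows the conjecture is equivalent to that open uniqueness question rather than letting us settle it outright.
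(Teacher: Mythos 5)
Your overall architecture coincides with the paper's: monotonicity of $x\mapsto\ubar{\Lambda}^x$ produces pointwise one-sided limits, which converge in $\operatorname{M1}$ by Proposition \ref{prop:M1conv}; the one-sided limits solve \eqref{eq:McK-V}; right-continuity then follows from minimality; the left limit $\ubar{\Lambda}^{x_0-}$ is physical by Theorem \ref{thm:phys}; and the conjecture is thereby reduced to whether the left limit coincides with the minimal solution, i.e.\ to uniqueness of physical solutions. But you leave the two hard steps unproved, and the first of them is substantially underestimated. Passing to the limit in $\ubar{\Lambda}^x=\Gamma[\ubar{\Lambda}^x;X_{0-}+x]$ is not ``standard'': since $\Gamma[\ell;\mu]_t=\alpha\bP(\inf_{0\le s\le t}(X_{0-}+B_s-\ell_s)\le 0)$ involves a running infimum, the Portmanteau theorem cannot be invoked without knowing the limiting event is a continuity set, and this is precisely where \cite[Proposition 5.1]{hambly_mckean-vlasov_2022} went wrong, as the paper's Remark \ref{rmk:error} explains. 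The fix in the paper is to verify the crossing property \eqref{eq:cross} for the limit path $X_{0-}+B-\wh\Lambda^\infty$ via \cite[proof of Proposition 3.4(i)]{antunovic_isolated_2011} (no zero of a Brownian motion with bounded variation drift is a local extremum) and then apply Lemma \ref{lem:cross}; your proposal supplies no substitute for this, which is the main technical content of the convergence step.

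Second, you correctly identify the maximality of $\ubar{\Lambda}^{x_0-}$ among physical solutions as the crux, but you only name it as ``the natural route'' without carrying it out. The paper's Theorem \ref{thm:nonunique} gives the argument: for $x<0$ and any physical $\Lambda$, the physical jump condition \eqref{eq:phys} forces $\ubar{\Lambda}^x_0\ge\Lambda_0$ at time zero; one then defines $t^*:=\inf\{t\ge 0:\ubar{\Lambda}^x_t<\Lambda_t\}$ and shows $t^*=\infty$ by ruling out $t^*=0$ via right-continuity and a short-time comparison, and ruling out $0<t^*<\infty$ by comparing the fixed-point expressions for $\ubar{\Lambda}^x_{t^*}$ and $\Lambda_{t^*}$ using an augmented boundary $\wt\Lambda$ equal to $\Lambda$ on $[0,t^*)$ and $\ubar{\Lambda}^x_{t^*}$ at $t^*$, obtaining a strict inequality from $|x|>0$. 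Letting $x\uparrow 0$ then yields $\Lambda^0\ge\Lambda$ for every physical $\Lambda$. Until both of these pieces are supplied, your proposal traces the shape of the paper's argument but does not close it; in particular, the claim that the conjecture is equivalent to the uniqueness question is asserted rather than proved.
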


In a positive direction, we are able to show that $x\mapsto \ubar{\Lambda}^x$ is c\`adl\`ag in the following sense:

\begin{thm}
Suppose that $\E|X_{0-}|<\infty$. Then $x\mapsto \ubar{\Lambda}^x$ is right-continuous from $\R$ to $(D([-1,\infty)),\operatorname{M1})$. As $x\uparrow 0$, $\ubar{\Lambda}^x$ converges in $(D([-1,\infty)),\operatorname{M1})$ to some $\Lambda^0\in M$, which is a solution to \eqref{eq:McK-V}.
\end{thm}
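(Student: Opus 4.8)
The plan is to push all the work onto the \emph{monotonicity} of the minimal--solution map in the shift parameter, and then let the minimality characterization do the rest. First I would record the key monotonicity: for $x\le y$ we have $X_{0-}+x\le X_{0-}+y$ pathwise, so for any barrier $\ell\in M$ the first hitting time of $0$ by the process in \eqref{eq:Gammadef} with datum $X_{0-}+x$ occurs (pathwise) no later than with datum $X_{0-}+y$; hence $\Gamma[\ell;X_{0-}+x]_t\ge\Gamma[\ell;X_{0-}+y]_t$ for every $t$. Combining this with the monotonicity of $\Gamma$ in $\ell$ and iterating from $\ell=0$ gives $\Gamma^n[0;X_{0-}+x]_t\ge\Gamma^n[0;X_{0-}+y]_t$ for all $n$ and $t$, and letting $n\to\infty$, $\ubar{\Lambda}^x_t\ge\ubar{\Lambda}^y_t$. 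Thus $x\mapsto\ubar{\Lambda}^x_t$ is nonincreasing for each fixed $t$, so the one-sided limits $\ubar{\Lambda}^{x_0\pm}_t:=\lim_{x\to x_0\pm}\ubar{\Lambda}^x_t$ exist for all $x_0$ and $t$, with $\ubar{\Lambda}^{x_0+}_t\le\ubar{\Lambda}^{x_0}_t\le\ubar{\Lambda}^{x_0-}_t$.

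Next I would show that the one-sided limits are themselves solutions of \eqref{eq:McK-V} with datum $X_{0-}+x_0$. Fix $x_0$ and a monotone sequence $x_n\to x_0$. The functions $\ubar{\Lambda}^{x_n}$ are nondecreasing in $t$ and converge, at every $t$, to $\ubar{\Lambda}^{x_0\pm}$; I would work with the c\`adl\`ag modification $L_t:=\lim_{s\downarrow t}\ubar{\Lambda}^{x_0\pm}_s$, which is nondecreasing and right-continuous, vanishes on $[-1,0)$, hence lies in $M$, and agrees with $\ubar{\Lambda}^{x_0\pm}$ at all of its continuity points. Using the standard description of $\operatorname{M1}$-convergence of monotone functions --- pointwise convergence at the continuity points of the limit together with convergence at the endpoints being sufficient --- together with Remark~\ref{rmk:cheaptrick} for the left endpoint and a restriction to $[-1,T_k]$ along continuity points $T_k\uparrow\infty$ of $L$, one obtains $\ubar{\Lambda}^{x_n}\to L$ in $(D([-1,\infty)),\operatorname{M1})$. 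Applying $\Gamma$ to the fixed-point identities $\ubar{\Lambda}^{x_n}=\Gamma[\ubar{\Lambda}^{x_n};X_{0-}+x_n]$, passing to the limit using continuity of $\Gamma[\,\cdot\,;\,\cdot\,]$ jointly in the barrier (the $\operatorname{M1}$-continuity of \cite[Proposition~2.1]{cuchiero_propagation_2020}) and under the perturbation $X_{0-}+x_n\to X_{0-}+x_0$ of the datum, and invoking uniqueness of $\operatorname{M1}$-limits, yields $L=\Gamma[L;X_{0-}+x_0]$, i.e.\ $L$ solves \eqref{eq:McK-V}.

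Then the two conclusions fall out. For $x_n\downarrow x_0$: the solution $L$ (the c\`adl\`ag modification of $\ubar{\Lambda}^{x_0+}$) satisfies $\ubar{\Lambda}^{x_0}\le L$ by minimality, while $\ubar{\Lambda}^{x_0+}_t\le\ubar{\Lambda}^{x_0}_t$ and the right-continuity of $\ubar{\Lambda}^{x_0}$ give $L_t=\lim_{s\downarrow t}\ubar{\Lambda}^{x_0+}_s\le\ubar{\Lambda}^{x_0}_t$; hence $L=\ubar{\Lambda}^{x_0}$ and $\ubar{\Lambda}^{x_n}\to\ubar{\Lambda}^{x_0}$ in $\operatorname{M1}$, which is the asserted right-continuity (at every $x_0\in\R$). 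For $x_n\uparrow 0$ with $x_n<0$: here $\ubar{\Lambda}^{0-}$ is a \emph{decreasing} pointwise limit of nondecreasing c\`adl\`ag functions, hence already nondecreasing and right-continuous, so $\Lambda^0:=\ubar{\Lambda}^{0-}$ lies in $M$ with no modification needed, $\ubar{\Lambda}^{x_n}\to\Lambda^0$ in $\operatorname{M1}$, and $\Lambda^0$ solves \eqref{eq:McK-V} by the previous paragraph. (Since each $\ubar{\Lambda}^{x_n}$ is physical by Proposition~\ref{prop:minisphys}, Theorem~\ref{thm:phys} even upgrades $\Lambda^0$ to a physical solution; minimality forces $\Lambda^0\ge\ubar{\Lambda}^0$ but, unlike for the right limit, cannot force equality --- this asymmetry is precisely what permits left-continuity to fail.)

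The hard part will be the limit passage in the second step: beyond the bookkeeping with the c\`adl\`ag modification (harmless, since it leaves left limits untouched and agrees with the pointwise limit at continuity points), one needs continuity of $\Gamma$ not merely in the barrier $\ell$ but also under perturbation of the initial datum. This is exactly where the $\operatorname{M1}$ topology --- rather than a stronger one --- is essential: a shift of the absorbing level by $o(1)$ can move a positive mass across the boundary \emph{at time $0$}, so the map is genuinely discontinuous at $0$ in the uniform or $\operatorname{J1}$ sense when $X_{0-}$ charges $\{-x_0\}$; however, the running minimum $\inf_{s\le t}(X_{0-}+B_s-\ell_s)=X_{0-}+\inf_{s\le t}(B_s-\ell_s)$ is atomless for $\ell\in M$ and $t$ outside the (countable) jump set of $\ell$, so no mass is lost in the limit at positive times, and the $t=0$ discrepancy is absorbed by $\operatorname{M1}$ (a jump of $\ubar{\Lambda}^{x_0}$ at $0$ being approximated by a slightly smaller jump followed by a rapid increase). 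I expect establishing this joint continuity rigorously --- most likely by reworking the argument behind \cite[Proposition~2.1]{cuchiero_propagation_2020} --- to be the main technical burden.
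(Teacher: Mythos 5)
Your proposal follows the same skeleton as the paper's actual proof, which proceeds by proving the more general Theorem \ref{thm:rc} (monotone CDF perturbations) and citing Corollary \ref{coro:ll}: monotonicity of $\ubar{\Lambda}^x$ in $x$ (via iterating $\Gamma^n[0;\cdot]$), pointwise monotone limits upgraded to $\operatorname{M1}$-convergence by Proposition \ref{prop:M1conv}, a limit passage to show the limit solves \eqref{eq:McK-V}, and a minimality argument that identifies the right limit with $\ubar{\Lambda}^{x_0}$ but not the left limit. Your observation that the c\`adl\`ag modification is unnecessary for $x\uparrow 0$ (a decreasing limit of nondecreasing right-continuous functions is right-continuous) is correct and slightly streamlines Corollary \ref{coro:ll}.

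However, there is a genuine gap at exactly the point you flag as ``the main technical burden,'' and your heuristic for filling it is not adequate. To pass to the limit in $\ubar{\Lambda}^{x_n}=\Gamma[\ubar{\Lambda}^{x_n};X_{0-}+x_n]$, the paper does \emph{not} rely on atomlessness of the running minimum, nor on reworking \cite[Proposition~2.1]{cuchiero_propagation_2020}. The real obstruction is that the first-passage indicator $\lambda_t(f)=\ind_{\{\tau_0(f)\le t\}}$ is unstable under $\operatorname{M1}$-perturbation of $f$ at paths that merely \emph{touch} zero without crossing, and the fix is the \emph{crossing property} \eqref{eq:cross}: the paper invokes \cite[proof of Proposition 3.4(i)]{antunovic_isolated_2011} to show that for any bounded increasing drift $\wh\Lambda^\infty$, the process $X_{0-}+B_t-\wh\Lambda^\infty_t$ almost surely crosses strictly below zero at its first hitting time. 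Only then does Lemma \ref{lem:cross} deliver $\lambda_t(f^n)\to\lambda_t(f)$ on a co-countable set of times, which after taking expectations gives \eqref{eq:verified}. Your atomlessness claim is neither proved nor sufficient: even granting that $\inf_{s\le t}(B_s-\ell_s)$ is atomless, Portmanteau still requires weak convergence of the infima $\inf_{s\le t}(X^n_{0-}+B_s-\ubar{\Lambda}^{x_n}_s)$, and that step is precisely the content of Lemma \ref{lem:cross}, which needs the crossing property as input. (The paper's Remark \ref{rmk:error} highlights that an earlier reference went wrong at essentially this point.) Without citing the result of \cite{antunovic_isolated_2011} or an equivalent substitute, the limit passage is not established.
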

\begin{proof}
Right continuity is a consequence of our more general convergence result, Theorem \ref{thm:rc}. The proof of that theorem shows that the left limit gives some solution to \eqref{eq:McK-V}. This is formally stated in Corollary \ref{coro:ll}.
\end{proof}
Returning to the notion of well-posedness, the conjecture effectively states that the problem of finding minimal solutions to \eqref{eq:McK-V} is well-posed among initial conditions with $\E|X_{0-}|<\infty$. 
Proposition \ref{prop:minisphys} tells us that the minimal solutions $\ubar{\Lambda}^x$ are physical, and hence by Theorem \ref{thm:phys}, the left limit point $\Lambda^0$ is physical. Therefore, if the initial condition $X_{0-}$ admits a unique physical solution, then $\Lambda^0=\ubar{\Lambda}^0$ and the conjecture holds (this was previously known, see \cite[Theorem 6.6]{cuchiero_propagation_2020}). 
To complicate matters, the following theorem shows that the non-uniqueness of physical solutions to \eqref{eq:McK-V} implies failure of left-continuity for the map $x\mapsto\ubar{\Lambda}^x$. In fact, the left limit $\Lambda^0=\lim_{x\uparrow 0}\ubar{\Lambda}^x$ is a \emph{maximal} physical solution in that it dominates all other physical solutions at all times.

\begin{thm}
\label{thm:nonunique}
Let $X_{0-}$ be an initial condition with $\E|X_{0-}|<\infty$ and let $\Lambda^0=\lim_{x\uparrow 0}\ubar{\Lambda}^x$. If $\Lambda$ is any physical solution to \eqref{eq:McK-V} with initial condition $X_{0-}$ then $\Lambda^0_t\ge \Lambda_t$ for all $t\ge 0$. As a result, if $X_{0-}$ admits at least two physical solutions, then $\Lambda^0\neq\ubar{\Lambda}^0$.
\end{thm}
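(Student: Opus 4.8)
The plan is to prove the stronger pointwise bound $\ubar{\Lambda}^x_t\ge\Lambda_t$ for all $t\ge0$, every $x<0$, and every physical solution $\Lambda$ of \eqref{eq:McK-V} with data $X_{0-}$; the statement for $\Lambda^0$ then follows by letting $x\uparrow0$, and the final consequence is immediate from minimality of $\ubar{\Lambda}^0$. Fix $x<0$ and work on one probability space carrying common data $X_{0-}$ and Brownian motion $B$: set $X_t=X_{0-}+B_t-\Lambda_t$ and $Y_t=X_{0-}+x+B_t-\ubar{\Lambda}^x_t$, with absorption times $\tau$ and $\sigma$, and write $\psi_t:=\Lambda_t-\ubar{\Lambda}^x_t$. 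The elementary but crucial identity is $Y_t-X_t=x+\psi_t=\psi_t-|x|$. Hence if $\psi_s\le|x|$ for all $s\le t$ then $Y_s\le X_s$ for all $s\le t$, so $\{\tau\le t\}\subseteq\{\sigma\le t\}$, so $\ubar{\Lambda}^x_t\ge\Lambda_t$, i.e.\ $\psi_t\le0$; and if $\psi_s\le|x|$ for all $s<t$ then $\{\sigma\ge t\}\subseteq\{\tau\ge t\}$.

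We first observe $\psi_0\le0$. By the physical jump condition \eqref{eq:phys} at $t=0$, the jump at $0$ of \emph{every} physical solution equals $\inf\{y>0:\alpha\bP(X_{0-}\in(0,y])<y\}$; since $\ubar{\Lambda}^0$ is physical (Proposition \ref{prop:minisphys}) this gives $\Lambda_0=\ubar{\Lambda}^0_0$, and since $x\mapsto\ubar{\Lambda}^x$ is nonincreasing (by the construction $\ubar{\Lambda}^x=\lim_n\Gamma^n[0;X_{0-}+x]$ together with monotonicity of $\Gamma$ in both arguments) we get $\psi_0=\ubar{\Lambda}^0_0-\ubar{\Lambda}^x_0\le0$. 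Now suppose toward a contradiction that $T:=\inf\{t\ge0:\psi_t>|x|\}<\infty$; since $\psi$ is c\`adl\`ag and $\psi_0\le0<|x|$, necessarily $T>0$. For $s<T$ the chain above yields $\psi_s\le0$, hence $\psi_{T-}\le0$, i.e.\ $a:=\Lambda_{T-}\le\ubar{\Lambda}^x_{T-}=:b$, and also $\{\sigma\ge T\}\subseteq\{\tau\ge T\}$.

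It now suffices to show $\psi_T\le0$, which is clear unless $\Lambda$ jumps at $T$, so assume $J:=\Lambda_T-\Lambda_{T-}>0$. Set $c:=|x|+(b-a)>0$ and $\nu(A):=\bP(\tau\ge T,\,X_{T-}\in A)$, $\nu'(A):=\bP(\sigma\ge T,\,Y_{T-}\in A)$. Since $Y_{T-}=X_{T-}-c$ identically and $\{\sigma\ge T\}\subseteq\{\tau\ge T\}$, we have $\nu'((0,y])\le\nu((c,c+y])$ for every $y\ge0$; because $T>0$ the variables $X_{T-},Y_{T-}$ are atomless, so $\nu((0,\infty))=1-a/\alpha$ and $\nu'((0,\infty))=1-b/\alpha$, and sending $y\to\infty$ gives $\alpha\,\nu((0,c])\le b-a=c-|x|$. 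Consequently $\alpha\,\nu((0,y])\le b-a<y$ for every $y\in(b-a,c]$, so the physical jump condition for $\Lambda$ forces $J=\inf\{y>0:\alpha\,\nu((0,y])<y\}\le b-a$, whence $\Lambda_T=a+J\le b=\ubar{\Lambda}^x_{T-}\le\ubar{\Lambda}^x_T$, i.e.\ $\psi_T\le0$. Since $\psi$ is c\`adl\`ag with $\psi_T\le0<|x|$ there is $\delta>0$ with $\psi_t<|x|$ on $[T,T+\delta)$, which together with $\psi_t\le|x|$ for $t<T$ contradicts $T=\inf\{t:\psi_t>|x|\}$. Therefore $\psi_t\le|x|$ for all $t$, and the opening chain (now with no cutoff) upgrades this to $\psi_t\le0$ for all $t$, i.e.\ $\ubar{\Lambda}^x\ge\Lambda$.

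To finish, since $x\mapsto\ubar{\Lambda}^x$ is nonincreasing and $\ubar{\Lambda}^x\ge\Lambda$ for all $x<0$, at every continuity point of $\Lambda^0$ we have $\Lambda^0_t=\lim_{x\uparrow0}\ubar{\Lambda}^x_t\ge\Lambda_t$, and hence $\Lambda^0\ge\Lambda$ everywhere by right-continuity of $\Lambda^0$ and $\Lambda$. For the last assertion, if $X_{0-}$ admits two distinct physical solutions $\Lambda^{(1)}\ne\Lambda^{(2)}$ then $\ubar{\Lambda}^0\le\Lambda^{(i)}\le\Lambda^0$ for $i=1,2$ (minimality of $\ubar{\Lambda}^0$ and the bound just proved), so $\Lambda^0=\ubar{\Lambda}^0$ would force $\Lambda^{(1)}=\Lambda^{(2)}$, a contradiction; thus $\Lambda^0\ne\ubar{\Lambda}^0$. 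I expect the main obstacle to be the jump step: one must run the crossing argument at the \emph{strictly positive} level $|x|$ so that the contradiction survives right-continuity, and then extract $J\le b-a$ from the physical jump condition via the total-mass inequality $\alpha\,\nu((0,c])\le b-a$; one should also check carefully that $T>0$ is genuinely forced (so that atomlessness of $X_{T-},Y_{T-}$ is available) and that the already-established $\operatorname{M1}$ convergence $\ubar{\Lambda}^x\to\Lambda^0$ as $x\uparrow0$ is consistent with the pointwise passage used in the conclusion.
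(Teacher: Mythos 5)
Your proof is correct. It establishes the same key intermediate claim as the paper, namely $\ubar{\Lambda}^x_t\ge\Lambda_t$ for all $t\ge0$ whenever $x<0$ and $\Lambda$ is any physical solution with data $X_{0-}$, and both proofs are first-crossing-time arguments that exploit the $|x|>0$ cushion together with the physical jump condition at the crossing time. The mechanics, however, are genuinely different. The paper runs the contradiction at level zero, setting $t^*:=\inf\{t:\ubar{\Lambda}^x_t<\Lambda_t\}$, rules out $t^*=0$ separately, and at a positive $t^*$ compares hitting probabilities via an augmented path $\wt\Lambda$ (equal to $\Lambda$ on $[0,t^*)$ and $\ubar{\Lambda}^x_{t^*}$ at $t^*$), ultimately invoking a \emph{strict} inequality $\alpha\bP(\inf X_{0-}+B-\wt\Lambda\le0)<\alpha\bP(\inf X_{0-}+x+B-\ubar{\Lambda}^x\le0)$ which, combined with the physical jump bound at $t^*-$, yields $\ubar{\Lambda}^x_{t^*}<\ubar{\Lambda}^x_{t^*}$. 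You instead couple the two processes pathwise, observe $Y_t-X_t=\psi_t-|x|$, run the crossing at the strictly positive level $|x|$ (which automatically guarantees $T>0$), and at time $T$ convert the pathwise ordering $\{\sigma\ge T\}\subseteq\{\tau\ge T\}$ into the quantitative sub-probability-measure bound $\alpha\,\nu((0,c])\le b-a<c$, from which the physical jump condition directly gives $J\le b-a$ and hence $\psi_T\le0$. What your approach buys is that the role of the cushion $|x|>0$ and of $T>0$ is fully transparent, and the strictness the paper asserts (which implicitly requires the distribution of $\inf_{[0,t^*]}(X_{0-}+x+B-\ubar{\Lambda}^x)$ to put mass on $(-|x|,0]$) is replaced by an elementary total-mass computation; the paper's route is shorter to state but leaves that strictness a little more implicit. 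Your treatment of $\psi_0\le0$ via $\ubar{\Lambda}^0_0$ and monotonicity of $x\mapsto\ubar{\Lambda}^x$ is an equally valid alternative to the paper's direct comparison of the two jump conditions at $t=0$, and your passage from $\ubar{\Lambda}^x\ge\Lambda$ to $\Lambda^0\ge\Lambda$ and the concluding non-uniqueness argument match the paper's.
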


Therefore, the question of well-posedness for the minimal solutions to \eqref{eq:McK-V}, just as with well-posedness for physical solutions, reduces to the question \emph{for which initial conditions does \eqref{eq:McK-V} admit a unique physical solution?} In other words, the conjecture holds if and only if all initial conditions with finite expectation admit unique physical solutions.

\medskip

Our general convergence result for minimal solutions, stated below, allows for both deterministic and random shifts to the initial condition: we consider a sequence of initial conditions $(X_{0-}^n)_{n\ge 1}$ and assume that the cumulative distribution functions (CDFs) of the initial conditions increase pointwise and converge to the CDF for $X_{0-}$ at all continuity points of the latter. 
Note that pointwise convergence of CDFs at continuity points of the limit is equivalent to weak convergence of probability measures on $\R$, by the Helly-Bray Theorem (see, for instance, \cite[Theorem 11.1.2]{dudley_real_2002}). Alternatively, we may consider $X_{0-}^n=X_{0-}+Y^n$ for a sequence of independent non-negative random variables $(Y^n)_{n\ge 1}$ with $Y^n\to0$ weakly and $\bP(Y^n\le x)\ge \bP(Y^m\le x)$ for all $x>0$ whenever $n>m$ (we note that this partial ordering is known as stochastic dominance).

\begin{thm}
\label{thm:rc}
Let $X_{0-}$ have CDF $F$. Let $(X_{0-}^n)_{n\ge 1}$ be random variables on $[0,\infty)$ with CDFs $(F^n)_{n\ge 0}$, respectively, such that $F(x) \ge F^n(x)\ge F^m(x)$ for all $x\ge 0$ whenever $m<n$, and $F^n(x)\to F(x)$ whenever $x$ is a continuity point of $F$. For $n\ge0$, let $\ubar{\Lambda}^n$ denote the minimal solution to \eqref{eq:McK-V} with initial data $X_{0-}^n$; and let $\ubar{\Lambda}$ denote the minimal solution with initial data $X_{0-}$. Then $\ubar{\Lambda}^n\to \ubar{\Lambda}$ in $(D([-1,\infty)),\operatorname{M1})$
\end{thm}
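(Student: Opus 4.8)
The plan is to leverage monotonicity of the fixed-point operator in the initial datum to reduce the statement to the identification of a monotone pointwise limit, and then to identify that limit with a fixed point of $\Gamma[\,\cdot\,;X_{0-}]$ through a hitting-time coupling. First I would record that $\Gamma$ is monotone in the CDF of the initial datum: if $G\le H$ pointwise are CDFs, then the quantile functions satisfy $G^{-1}\ge H^{-1}$, so realizing $G^{-1}(U)$ and $H^{-1}(U)$ on a common space with a single $U\sim\mathrm{Unif}(0,1)$ independent of $B$ gives $G^{-1}(U)+B-\ell\ge H^{-1}(U)+B-\ell$ pathwise, hence a larger absorption time, hence $\Gamma[\ell;G]_t\le\Gamma[\ell;H]_t$ for all $t$. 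Together with the monotonicity of $\Gamma$ in $\ell$ recalled in the text and the defining Picard iteration $\ubar\Lambda=\lim_k\Gamma^k[0;\,\cdot\,]$, an induction on $k$ gives $\Gamma^k[0;X_{0-}^m]\le\Gamma^k[0;X_{0-}^n]\le\Gamma^k[0;X_{0-}]$ pointwise for $m<n$; letting $k\to\infty$ yields $\ubar\Lambda^m_t\le\ubar\Lambda^n_t\le\ubar\Lambda_t$ for all $t\ge0$.

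Next, set $g_t:=\sup_n\ubar\Lambda^n_t$; this is nondecreasing, bounded by $\alpha$, dominated by $\ubar\Lambda$, and I write $\bar g$ for its right-continuous modification (the pointwise limit of monotone c\`adl\`ag functions need not itself be c\`adl\`ag, whence the modification). Since all the $\ubar\Lambda^n$ vanish on $[-1,0)$, the standard characterization of $\operatorname{M1}$ convergence for monotone functions -- convergence at the endpoints together with convergence on a dense set of points -- shows $\ubar\Lambda^n\to\bar g$ in $(D([-1,\infty)),\operatorname{M1})$, since $\ubar\Lambda^n_t\uparrow g_t=\bar g_t$ at every continuity point $t$ of $\bar g$. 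It therefore remains only to prove $\bar g=\ubar\Lambda$.

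The crux is to show $\bar g$ solves \eqref{eq:McK-V} with datum $X_{0-}$. I would couple all the data on one space by quantile coupling, $X_{0-}^n=(F^n)^{-1}(U)$ and $X_{0-}=F^{-1}(U)$ with $U\sim\mathrm{Unif}(0,1)$ independent of $B$, so that $X_{0-}^n\downarrow X_{0-}$ a.s.; then, with $\tau^n:=\inf\{s\ge0:X_{0-}^n+B_s-\ubar\Lambda^n_s\le0\}$, one has $\bP(\tau^n\le t)=\ubar\Lambda^n_t/\alpha$, the driving processes are pointwise nonincreasing in $n$ so $\tau^n\downarrow\tau^\infty$, and a short computation with right limits gives $\bar g_t=\lim_{s\downarrow t}g_s=\alpha\,\bP(\bigcap_{s>t}\bigcup_n\{\tau^n\le s\})=\alpha\,\bP(\tau^\infty\le t)$. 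Because $X_{0-}^n+B_s-\ubar\Lambda^n_s\ge X_{0-}+B_s-\bar g_s$ for every $s$, we get $\tau^n\ge\tau^{\bar g}:=\inf\{s\ge0:X_{0-}+B_s-\bar g_s\le0\}$, hence $\bar g_t\le\alpha\bP(\tau^{\bar g}\le t)=\Gamma[\bar g;X_{0-}]_t$ for all $t$. For the reverse inequality I argue on $\{\tau^{\bar g}<t\}$: $\tau^{\bar g}$ is a stopping time ($\bar g$ being deterministic) and $X_{0-}+B_{\tau^{\bar g}}-\bar g_{\tau^{\bar g}}\le0$ by right-continuity, so by the strong Markov property and Brownian oscillation there is a.s.\ some $s\in(\tau^{\bar g},t)$ with $B_s<B_{\tau^{\bar g}}$, and then $X_{0-}+B_s-\bar g_s<X_{0-}+B_{\tau^{\bar g}}-\bar g_{\tau^{\bar g}}\le0$ (using $\bar g$ nondecreasing); as $\bar g$ has only countably many jumps, such an $s$ may be chosen to be a continuity point of $\bar g$, at which $g_s=\bar g_s$, so $X_{0-}^n+B_s-\ubar\Lambda^n_s\to X_{0-}+B_s-\bar g_s<0$ and hence $\tau^n\le s<t$ for all large $n$, giving $\tau^\infty<t$. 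This yields $\alpha\bP(\tau^{\bar g}<t)\le\alpha\bP(\tau^\infty\le t)=\bar g_t$, i.e.\ $\Gamma[\bar g;X_{0-}]_{t-}\le\bar g_t\le\Gamma[\bar g;X_{0-}]_t$; since both $\bar g$ and $\Gamma[\bar g;X_{0-}]$ are c\`adl\`ag and agree at the co-countably many continuity points of $\Gamma[\bar g;X_{0-}]$, they coincide, so $\bar g=\Gamma[\bar g;X_{0-}]$.

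Thus $\bar g$ is a solution of \eqref{eq:McK-V} with datum $X_{0-}$; minimality of $\ubar\Lambda$ gives $\ubar\Lambda\le\bar g$, while the first step gives $\bar g\le\ubar\Lambda$, so $\bar g=\ubar\Lambda$, and the $\operatorname{M1}$ convergence from the second step becomes $\ubar\Lambda^n\to\ubar\Lambda$. I expect the main obstacle to be the third step, and within it the passage from the one-sided inequality $\bar g\le\Gamma[\bar g;X_{0-}]$ (the monotone limit is a priori only a sub-fixed-point) to equality: this requires running the Brownian-oscillation argument precisely at continuity points of $\bar g$ so that the pointwise convergence $\ubar\Lambda^n_s\to\bar g_s$ is genuinely available there, and then a right-continuity argument to upgrade agreement on a dense set to agreement everywhere. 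A secondary technical point is the $\operatorname{M1}$ statement for monotone functions in the second step, where the left-endpoint compatibility (here automatic, all functions vanishing on $[-1,0)$) and the passage to the c\`adl\`ag modification must be handled with care.
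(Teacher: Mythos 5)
Your proof is correct and reaches the conclusion by a genuinely different route in the crucial identification step. The monotonicity bookkeeping (Step 1), the passage to the right-continuous modification, and the Proposition~\ref{prop:M1conv} characterization of M1 convergence for monotone functions (Step~2) coincide with the paper. The divergence is in Step~3, where you must show the right-continuous pointwise limit is a fixed point of $\Gamma[\,\cdot\,;X_{0-}]$. The paper gets this by invoking Lemma~\ref{lem:cross} (a black-box convergence result from \cite{cuchiero_propagation_2020} for the path functionals $\lambda_t$) and verifies its hypothesis, the crossing property \eqref{eq:cross}, by appealing to \cite[proof of Proposition 3.4(i)]{antunovic_isolated_2011} on isolated zeros of Brownian motion with bounded-variation drift. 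You instead exploit the monotone structure directly: you realize the initial data on a common space via quantile coupling so that $X_{0-}^n\downarrow X_{0-}$ a.s.\ and the driven processes are a.s.\ nonincreasing in $n$, which yields a monotone sequence of hitting times $\tau^n\downarrow\tau^\infty$ and the identity $\bar g_t=\alpha\bP(\tau^\infty\le t)$; the sub-fixed-point inequality is then immediate, and you close the gap by a hands-on strong Markov plus Brownian oscillation argument at continuity points of $\bar g$. In effect you simultaneously re-derive (a monotone special case of) Lemma~\ref{lem:cross} and the crossing property, the latter being simpler in your setting precisely because $\bar g$ is increasing, not merely of bounded variation, so the drift only helps push the path below its level at $\tau^{\bar g}$. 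What the paper's route buys is modularity and reuse of existing machinery; what yours buys is self-containedness and a transparent probabilistic picture of why the limit equation holds, at the small cost of needing the two-sided squeeze $\Gamma[\bar g;X_{0-}]_{t-}\le\bar g_t\le\Gamma[\bar g;X_{0-}]_t$ and a density-plus-right-continuity argument to conclude equality, which you handle correctly.
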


In \cite[Proposition 5.1]{hambly_mckean-vlasov_2022}, convergence for the initial conditions $X_{0-}^n=X_{0-}+\xi^n$ is considered, where $\xi^n$ is an exponential random variable with rate $n$. Following the reasoning of \cite[Section 3]{hambly_mckean-vlasov_2022}, we see that this in turn is a generalization of our earlier result \cite[Theorem 1.2]{baker_zero_2022}, where it was assumed that the density for $X_{0-}$ is bounded by $\alpha/2$ 
(this bound renders the physical solution unique and continuous by \cite[Theorem 2.2 and last paragraph of Section 2.1]{ledger_uniqueness_2020}).
Our proof of Theorem \ref{thm:rc} fixes an error in the proof of \cite[Proposition 5.1]{hambly_mckean-vlasov_2022} (see Remark \ref{rmk:error}) and further generalizes the result by using a fact about the zeroes of Brownian motion with a bounded variation drift. Specifically, we make use of \cite[proof of Proposition 3.4(i)]{antunovic_isolated_2011} to show that almost surely none of these zeroes are local maxima. This observation has also found recent use in the works \cite{baker_singular_2023,nadtochiy_stefan_2022}.

\medskip

The outline of the remainder of the paper is as follows: in the following subsection we will cover some technical preliminaries. Then, in Sections \ref{sec:rc} and \ref{sec:phys} we give the proofs of our convergence results for minimal and physical solutions, respectively. Finally, in Section \ref{sec:nonunique}, we prove Theorem \ref{thm:nonunique}, thus showing that non-uniqueness of physical solutions implies the ill-posedness of minimal solutions.

\subsection{Mathematical Preliminaries}

We first state a useful characterization for convergence of monotone functions in the M1 topology. 

\begin{prop}[Corollary 12.5.1 in \cite{whitt_stochastic-process_2002}]\label{prop:M1conv}
Let $D([T_0,T_1])$ denote the space of c\`adl\`ag functions from $[T_0,T_1]$ to $\R$ which are left continuous at $T_0$. Suppose that $(f^n)_{n\ge 1}$ is a family of non-decreasing functions in $D([T_0,T_1])$. Then $f^n\to f$ in $(D([T_0,T_1]),\operatorname{M1})$ if and only if $f^n(t)\to f(t)$ for all $t$ in a dense subset of $[T_0,T_1]$ including $T_0$ and $T_1$.
\end{prop}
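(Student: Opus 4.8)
The plan is to prove the two implications separately, invoking monotonicity only where it is genuinely needed. For the forward direction I would appeal to general facts about the $\operatorname{M1}$ topology: if $f^n\to f$ in $(D([T_0,T_1]),\operatorname{M1})$ then $f^n(t)\to f(t)$ at every continuity point $t$ of $f$, and also at $t=T_0$ and $t=T_1$, since every admissible parametric representation of a completed graph is pinned at its endpoints, sending $0\mapsto(T_0,f(T_0))$ and $1\mapsto(T_1,f(T_1))$, so that $|f^n(T_0)-f(T_0)|$ and $|f^n(T_1)-f(T_1)|$ are dominated by the $\operatorname{M1}$ distance $d_{\operatorname{M1}}(f^n,f)$. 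The continuity points of $f$ are dense in $[T_0,T_1]$ because a c\`adl\`ag function has at most countably many discontinuities, and $T_0$ is one of them: right-continuity of $f$ there together with the convention that elements of $D([T_0,T_1])$ are left-continuous at $T_0$ force $f(T_0-)=f(T_0)=f(T_0+)$. Hence the dense set $\{\text{continuity points of }f\}\cup\{T_0,T_1\}$ witnesses the required pointwise convergence, and monotonicity was not used here at all.

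For the reverse direction I would pass to completed graphs $\Gamma_g=\{(t,z):z\text{ lies between }g(t-)\text{ and }g(t)\}$, with the convention $g(T_0-):=g(T_0)$. Monotonicity enters in two ways. First, together with the assumed limits $f^n(T_0)\to f(T_0)$ and $f^n(T_1)\to f(T_1)$ (two members of the given dense set $S$), it confines all but finitely many $\Gamma_{f^n}$, as well as $\Gamma_f$, to a fixed compact rectangle. Second, for non-decreasing $g$ the graph $\Gamma_g$ is a \emph{monotone arc}: running along it from $(T_0,g(T_0))$ to $(T_1,g(T_1))$ both coordinates are non-decreasing, so $(t,z)\mapsto(t-T_0)+(z-g(T_0))$ is a strictly increasing homeomorphism of $\Gamma_g$ onto an interval, whose rescaled inverse is a canonical, automatically admissible, parametric representation $\gamma_g\colon[0,1]\to\Gamma_g$. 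For such arcs, Hausdorff convergence $\Gamma_{f^n}\to\Gamma_f$ in $\R^2$ is equivalent to uniform convergence $\gamma_{f^n}\to\gamma_f$ (if some subsequence had $|\gamma_{f^n}(s_n)-\gamma_f(s_n)|\ge\varepsilon$ with $s_n\to s^*$, then Hausdorff convergence, together with the joint continuity of normalised arc-position in the point and in the pair $(f^n(T_0),f^n(T_1))$, would force $\gamma_{f^n}(s_n)\to\gamma_f(s^*)=\lim_n\gamma_f(s_n)$, a contradiction), and this yields $d_{\operatorname{M1}}(f^n,f)\le\|\gamma_{f^n}-\gamma_f\|_\infty\to0$. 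So the task reduces to showing that pointwise convergence on the dense set $S\ni T_0,T_1$ forces $\Gamma_{f^n}\to\Gamma_f$ in Hausdorff distance, which I would get from two inclusions: every subsequential limit of points $(t_n,z_n)\in\Gamma_{f^n}$ lies in $\Gamma_f$ — here one uses that for monotone $f$ the quantities $\sup\{f(s):s\in S,\ s<t\}$ and $\inf\{f(s):s\in S,\ s>t\}$ recover $f(t-)$ and $f(t)$; and every $(t,z)\in\Gamma_f$ is a limit of points of $\Gamma_{f^n}$ — here, taking $s_k\uparrow t$ and $s_k'\downarrow t$ in $S$, the level $z_n:=\max\{f^n(s_k),\min(z,f^n(s_k'))\}\in[f^n(s_k),f^n(s_k')]$ is attained by $\Gamma_{f^n}$ at some $t_n\in[s_k,s_k']$ by the intermediate-value property of completed graphs, and $z_n\to z$ as $n\to\infty$. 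A finite-net/compactness argument then upgrades these two inclusions to genuine Hausdorff convergence.

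The step I expect to cause the most trouble is the endpoint and one-sided-limit bookkeeping that pervades $\operatorname{M1}$ arguments: checking carefully that the canonical parametrisation is admissible and that its uniform convergence really is equivalent to Hausdorff convergence of the monotone arcs, and running the two inclusions above with the correct conventions at $T_0$ (where $f(T_0-)=f(T_0)$) and at $T_1$ (which need not be a continuity point, so that $f^n(T_1)\to f(T_1)$ must be extracted from the fact that $(T_1,f^n(T_1))$ is the top-right vertex of the arc $\Gamma_{f^n}$). Since the statement is quoted verbatim as Corollary 12.5.1 of \cite{whitt_stochastic-process_2002}, one may of course simply cite that reference instead.
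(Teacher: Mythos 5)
The paper gives no proof of this proposition: it is stated with the Whitt citation in the theorem header and then used as a black box, which is exactly the fallback you identify in your final sentence. So the only substantive comparison is between your sketch and Whitt's own argument. Your reconstruction is correct in outline. For the forward implication you use that admissible parametric representations are pinned at the endpoints (so $|f^n(T_i)-f(T_i)|\le d_{\operatorname{M1}}(f^n,f)$ for $i=0,1$) together with the standard fact that $\operatorname{M1}$ convergence forces pointwise convergence at continuity points of the limit, and these points are dense. For the reverse implication you identify the completed graph of a monotone function with a monotone arc carrying a canonical arc-position parametrisation, reduce uniform convergence of these parametrisations to Hausdorff convergence of the graphs, and obtain Hausdorff convergence from two Kuratowski-type inclusions plus compactness of the containing rectangle. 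This is a valid, self-contained geometric argument, but it is not Whitt's route: in the book, Corollary 12.5.1 is deduced from the local-oscillation characterization of $\operatorname{M1}$ convergence (Theorem 12.5.1 there), with monotonicity entering only through the observation that the $\operatorname{M1}$ oscillation function of a monotone path vanishes identically, so that pointwise convergence on a dense set containing the endpoints is all that remains to check. Your approach trades that machinery for a direct Hausdorff-convergence argument; the two steps you yourself flag --- the equivalence of Hausdorff convergence of the arcs with uniform convergence of the canonical parametrisations, and the Kuratowski-to-Hausdorff upgrade --- are precisely where a full write-up would have to supply routine but nontrivial detail. Since the paper treats the proposition as a cited fact, none of this work is required by the paper itself.
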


Following \cite{whitt_stochastic-process_2002}, we say that $f^n\to f$ in $(D([T_0,\infty)),\operatorname{M1})$ if there is a sequence $(T_N)_{N\ge1}$ with $T_N\nearrow\infty$ such that $f^n\to f$ in $(D([T_0,T_1]),\operatorname{M1})$ for all $N\ge 1$. Note that this imposes convergence at the left endpoint: $f^n(T_0)\to f(T_0)$. 
Recall from Remark \ref{rmk:cheaptrick} that we are working with convergence in $(D([-1,\infty)),\operatorname{M1})$, and we will embed any function $f\in D([0,\infty))$ into $D([-1,\infty))$ by defining $f(x)=f(0-)$ for $x\in [-1,0)$.

\medskip

For the proof of Theorem \ref{thm:rc}, we will also make use of the following definition and technical lemma from \cite[Definition 5.1 and Lemma 5.4]{cuchiero_propagation_2020}.

\begin{defn}
For $f\in D([-1,\infty))$ define the path functionals
\begin{align}
    \tau_0(f):=\inf\{s\ge 0: f_s\le 0\}\text{ and }\lambda_t(f):=\ind_{\{\tau_0(f)\le t\}}.
\end{align}
\end{defn}

\begin{lem}\label{lem:cross}
Let $(f,f^1,f^2,\dots)\subset D([0,\infty))$. Suppose that $f^n\to f$ in $(D([-1,\infty)),\operatorname{M1})$ and $f$ satisfies the crossing property
\begin{align}\label{eq:cross}
\inf_{0\le s \le h}(f_{\tau_0(f)+s}-f_{\tau_0(f)})<0\text{ for all }h>0.
\end{align}
Then there exists some co-countable set $E$ such that
\begin{align}
\lim_{n\to\infty} \lambda_t(f^n)=\lambda_t(f)
\end{align}
for all $t\in E$.
\end{lem}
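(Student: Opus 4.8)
The plan is to reduce the statement to a convergence of running infima. For a path $g\in D([0,\infty))$ and $t\ge 0$, put $m_t(g):=\inf_{0\le s\le t}g_s$. The elementary dichotomy I would record first is: if $m_t(g)<0$ then $g_s<0$ for some $s\in[0,t]$, so $\tau_0(g)\le t$ and $\lambda_t(g)=1$; while if $m_t(g)>0$ then $g_s\ge m_t(g)>0$ for all $s\in[0,t]$, so $\tau_0(g)>t$ and $\lambda_t(g)=0$. Consequently, for any fixed $t$ with $m_t(f)\ne 0$, it is enough to prove $m_t(f^n)\to m_t(f)$: the numbers $m_t(f^n)$ then eventually share the sign of $m_t(f)$, and $\lambda_t(f^n)\to\lambda_t(f)$ follows at once.

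To obtain $m_t(f^n)\to m_t(f)$ I would use two standard facts about the $\operatorname{M1}$ topology, both flowing from the parametric (completed-graph) description of $\operatorname{M1}$ convergence: the running-infimum functional $g\mapsto m_\cdot(g)$ is continuous from $(D([-1,T]),\operatorname{M1})$ into itself for every $T$ (it is the time-axis reflection of the supremum functional, whose $\operatorname{M1}$-continuity is classical, see \cite{whitt_stochastic-process_2002}); and $\operatorname{M1}$-convergence of paths forces convergence of their values at every continuity point of the limit. Applying the first fact to the hypothesis $f^n\to f$ in $(D([-1,T]),\operatorname{M1})$ gives $m_\cdot(f^n)\to m_\cdot(f)$ in $\operatorname{M1}$, and then the second gives $m_t(f^n)\to m_t(f)$ for every $t$ in the co-countable set of continuity points of the nonincreasing, c\`adl\`ag function $t\mapsto m_t(f)$.

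The remaining task is to show $m_t(f)\ne 0$ whenever $t\ne\tau_0(f)$; granting this, the lemma follows with $E$ the set of continuity points of $t\mapsto m_t(f)$ other than $\tau_0(f)$, which is co-countable. (One may assume $\tau_0(f)<\infty$, the crossing property being vacuous otherwise.) For $t>\tau_0(f)$, the crossing property \eqref{eq:cross} with $h=t-\tau_0(f)$ yields $s\in[0,h]$ with $f_{\tau_0(f)+s}<f_{\tau_0(f)}\le 0$, hence $m_t(f)\le f_{\tau_0(f)+s}<0$. For $t<\tau_0(f)$, one has $f_s>0$ for all $s\le t$; using moreover that the paths to which the lemma is applied are of the form $X_{0-}+B_\cdot-\ell_\cdot$ with $\ell$ nondecreasing --- so all their jumps are downward and $f_{s-}\ge f_s>0$ too --- the completed graph of $f$ over the compact interval $[0,t]$ is a compact set disjoint from $\{y\le 0\}$, whence $m_t(f)>0$. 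This last point is the one place the downward-jump structure is genuinely used: without it, $f$ could approach $0$ through a left limit at some $s<\tau_0(f)$, forcing $m_t(f)=0$ on a whole subinterval, and the conclusion would fail.

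The main obstacle is conceptual rather than computational: first-passage times are \emph{not} continuous functionals for the $\operatorname{M1}$ topology, so for a generic $\operatorname{M1}$-convergent sequence $\tau_0(f^n)$ need not converge to $\tau_0(f)$, and $\lambda_t(f^n)$ can oscillate on an entire interval of times. The crossing property is exactly what excludes the first failure mode --- $f$ merely grazing $0$ at $\tau_0(f)$ and rebounding while the approximants never reach $0$, which gives $\lambda_t(f^n)\to 0\ne 1$ just above $\tau_0(f)$ --- and the downward-jump structure is exactly what excludes the second --- the approximants crossing $0$ prematurely near a left limit of $f$ at level $0$, which gives $\lambda_t(f^n)\to 1\ne 0$ just below $\tau_0(f)$. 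Once both are in place, the estimates in the previous paragraph are all that is needed.
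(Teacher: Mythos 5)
The paper does not give its own proof of this lemma; it is imported from \cite[Lemma 5.4]{cuchiero_propagation_2020} and used as a black box, so there is no in-paper argument to compare against. Judged on its own terms, your reduction to running infima is the right idea: conclude $m_t(f^n)\to m_t(f)$ at continuity points of $t\mapsto m_t(f)$ from the $\operatorname{M1}$-continuity of the running-infimum map (which is indeed the negated running supremum, whose $\operatorname{M1}$-continuity is in \cite{whitt_stochastic-process_2002}), observe that the continuity points form a co-countable set because $m_\cdot(f)$ is monotone, and then read off the sign of $m_t(f)$ to determine $\lambda_t(f)$. The handling of $t>\tau_0(f)$ via the crossing property is correct, as is the reduction of the remaining problem to showing $m_t(f)\ne 0$.

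The one real issue is the one you flag yourself: to get $m_t(f)>0$ for $t<\tau_0(f)$ you invoke that $f$ has only downward jumps (as it will when $f=X_{0-}+B-\ell$ with $\ell$ nondecreasing), and this hypothesis is not in the statement of Lemma \ref{lem:cross}. You are right that it cannot be dispensed with: if $f$ has an upward jump at some $s_0<\tau_0(f)$ with $f_{s_0-}=0$, then approximants that dip to $-1/n$ just before $s_0$ still converge to $f$ in $\operatorname{M1}$ (the dip lies within $O(1/n)$ of the vertical segment of $f$'s completed graph at $s_0$), yet $\lambda_t(f^n)=1\ne 0=\lambda_t(f)$ for every $t$ in the whole interval $(s_0,\tau_0(f))$, so no co-countable $E$ exists. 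Thus the statement as quoted genuinely needs either a no-upward-jumps hypothesis or the stronger condition $\inf_{0\le s\le t}f_s>0$ for all $t<\tau_0(f)$; the crossing property at $\tau_0(f)$ alone does not suffice. Since the lemma is applied in this paper and in \cite{cuchiero_propagation_2020} only to Brownian-minus-increasing-drift paths, where the missing hypothesis holds automatically, the omission is harmless in context, and your proof covers precisely those cases. This is less a flaw in your reasoning than a legitimate imprecision in the lemma's standalone formulation; a proof of the statement exactly as written does not appear to be possible.
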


\section{Convergence of Minimal Solutions}\label{sec:rc}

We now present the proof of our general convergence result for minimal solutions.

\begin{proof}[Proof of Theorem \ref{thm:rc}]
The condition $F(x)\ge F^n(x)\ge F^m(x)$ for all $x\ge 0$ whenever $m<n$ implies
\begin{align}
\Gamma^{k}[0;X_{0-}^m]_t\le \Gamma^{k}[0;X_{0-}^n]_t \le \Gamma^{k}[0;X_{0-}]_t,\quad t\ge 0
\end{align}
for all $k\ge 1$. Taking the limit as $k\to\infty$ yields $\ubar{\Lambda}^m_t\le \ubar{\Lambda}^n_t \le \ubar{\Lambda}_t$ for all $t\ge 0$. For each $t\ge 0$ we define the pointwise limit
\begin{align}\label{eq:pw}
\Lambda^\infty_t := \lim_{n\to\infty} \ubar{\Lambda}^n_t
\end{align}
and note that $\Lambda^\infty_t\le  \ubar{\Lambda}_t$.

\medskip

The function $t\mapsto \Lambda^\infty_t$ is non-decreasing and hence has at most countably many discontinuities, which we denote by $J$. $\Lambda^\infty$ may not be c\`adl\`ag, so we define $\wh{\Lambda}^\infty_t = \lim_{s\downarrow t}\Lambda^\infty_s$. We have that $\ubar{\Lambda}^n_t \to \wh{\Lambda}^\infty_t$ whenever $t\in J^c$. We extend the domain of all relevant functions to $[-1,\infty)$ by setting them to zero on all of $[-1,0)$. Proposition \ref{prop:M1conv}, gives $\ubar{\Lambda}^n\to \wh{\Lambda}^\infty$ in $(D([-1,t]),\operatorname{M1})$ for all $t\in J^c$ and hence on $(D([-1,\infty)),\operatorname{M1})$.

\medskip

Provided we can show
\begin{align}\label{eq:limit}
\wh\Lambda^\infty_t=\bP (\inf_{0\le s\le t}X_{0-}+B_s-\alpha\wh\Lambda^\infty_s\le 0),
\end{align}
we will have $\wh\Lambda^\infty = \ubar{\Lambda}$ by minimality of the latter. Since $\wh\Lambda^\infty$ is increasing and bounded, \cite[proof of Proposition 3.4(i)]{antunovic_isolated_2011} implies that $t\mapsto X_{0-}+B_t-\wh\Lambda^\infty_t$ satisfies the crossing property \eqref{eq:cross}.
Recall that the pointwise convergence of $F^n\to F$ at continuity points implies that $X^n_{0-}\to X_{0-}$ in law, by the Helly-Bray Theorem \cite[Theorem 11.1.2]{dudley_real_2002}.
Therefore, we may apply Lemma \ref{lem:cross} to obtain
\begin{align}\label{eq:verified}
\lim_{n\to\infty}\bP (\inf_{0\le s\le t}X_{0-}+B_s-\alpha\ubar{\Lambda}^n_s\le 0)=\bP (\inf_{0\le s\le t}X_{0-}+B_s-\alpha\wh\Lambda^\infty_s \le 0)
\end{align}
for $t$ in a co-countable set $E$. The left hand side of the above expression is the definition of $\wh\Lambda^\infty_t$ when $t\in J^c$.
Hence $\wh\Lambda^\infty_t = \ubar{\Lambda}_t$ for $J^c \cap E$ (and in fact everywhere by the right continuity of both) and $\ubar{\Lambda}^n\to \ubar{\Lambda}$ in $(D([-1,\infty)),\operatorname{M1})$. \qedhere

\end{proof}

\begin{rmk}\label{rmk:error}
We note an error in the proof of \cite[Proposition 5.1]{hambly_mckean-vlasov_2022}: the last display equation in that proof gives
\begin{align}
\lim_{\kappa\to \infty}\Lambda^\kappa_t=\lim_{\kappa\to \infty}\bP (X_{0-}+\xi^\kappa +B_t-\alpha\Lambda^\kappa_t\le 0)=\bP (X_{0-}+B_t-\alpha\Lambda^\infty_t\le 0),
\end{align}
which is similar to \eqref{eq:verified}, except that the infimum is missing from the expression for $\Lambda^\kappa_t$. This equality is then verified by the Portmanteau Theorem (see, e.g., \cite[11.1.1]{dudley_real_2002}) since $X_{0-}+B_t-\alpha\Lambda^\infty_t$ has no atoms. However, this fails when the infimum is present. 
\end{rmk}

The proof of Theorem \ref{thm:rc} also gives us some information about left limits: consider a sequence $X^n_{0-}\to X_{0-}$ with $F\le F^n\le F^m$ whenever $m<n$. This holds, for instance, when $X^n_{0-}=X_{0-}-x_n$ where $(x_n)_{n\ge 1}$ is a sequence of non-negative numbers converging monotonically to zero. Then $\ubar{\Lambda}^m_t\ge \ubar{\Lambda}^n_t \ge \ubar{\Lambda}_t$ for all $t\ge 0$ and one may define a pointwise limit $\Lambda^\infty$ analogous to \eqref{eq:pw}. Continuing through the proof we see that the right continuous version $\wh\Lambda^\infty$ solves \eqref{eq:McK-V}. However, without the inequality $\Lambda^\infty_t\le \ubar{\Lambda}_t$ at all $t\ge 0$, we cannot conclude that $\wh\Lambda^\infty$ is a minimal solution. We summarize the considerations of this paragraph as a corollary. 
\begin{coro}\label{coro:ll}
Let $X_{0-}$ have CDF $F$. Let $(X_{0-}^n)_{n\ge 1}$ be random variables on $[0,\infty)$ with CDFs $(F^n)_{n\ge 0}$, respectively, such that $F(x) \le F^n(x)\le F^m(x)$ for all $x\ge 0$ whenever $m<n$, and $F^n(x)\to F(x)$ whenever $x$ is a continuity point of $F$. For $n\ge0$, let $\ubar{\Lambda}^n$ denote the minimal solution to \eqref{eq:McK-V} with initial data $X_{0-}^n$. Then $\Lambda^{\infty}$ defined by
\begin{align}
\Lambda^\infty_t = \lim_{s\downarrow t}\lim_{n\to\infty}\ubar{\Lambda}^n_s
\end{align}
solves \eqref{eq:McK-V} with initial data $X_{0-}$.
\end{coro}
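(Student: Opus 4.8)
The plan is to re-run the proof of Theorem \ref{thm:rc} with the inequality between the CDFs reversed, and to observe that every step survives verbatim except the final appeal to minimality. First, monotonicity of $\Gamma[\,\cdot\,;\mu]$ in the law $\mu$ of the initial datum, together with $F(x)\le F^n(x)\le F^m(x)$ for $x\ge0$ and $m<n$, gives $\Gamma^{k}[0;X_{0-}]_t\le\Gamma^{k}[0;X_{0-}^n]_t\le\Gamma^{k}[0;X_{0-}^m]_t$ for all $k\ge1$ and $t\ge0$; letting $k\to\infty$ yields $\ubar{\Lambda}_t\le\ubar{\Lambda}^n_t\le\ubar{\Lambda}^m_t$ for all $t\ge0$. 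Thus, for each fixed $t$, the sequence $(\ubar{\Lambda}^n_t)_n$ is non-increasing and bounded below by $\ubar{\Lambda}_t\ge0$, so $\lim_n\ubar{\Lambda}^n_t$ exists pointwise; as a monotone limit of non-decreasing functions it is non-decreasing in $t$, hence has at most countably many discontinuities, which we denote by $J$, and the $\Lambda^\infty$ of the statement is precisely its right-continuous modification, lying in $M$ once extended by $0$ on $[-1,0)$. On $J^c$ we have $\ubar{\Lambda}^n_t\to\Lambda^\infty_t$, so Proposition \ref{prop:M1conv}, applied on $[-1,t]$ for each $t\in J^c$ and then along an exhausting sequence of such $t$, gives $\ubar{\Lambda}^n\to\Lambda^\infty$ in $(D([-1,\infty)),\operatorname{M1})$.

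It then remains to verify that $\Lambda^\infty$ is a fixed point of $\Gamma[\,\cdot\,;X_{0-}]$, i.e. that it satisfies \eqref{eq:limit}, after which the conclusion that $\Lambda^\infty$ solves \eqref{eq:McK-V} with data $X_{0-}$ is immediate. Since $\Lambda^\infty$ is bounded and non-decreasing, hence of bounded variation, \cite[proof of Proposition 3.4(i)]{antunovic_isolated_2011} shows the drifted path $t\mapsto X_{0-}+B_t-\alpha\Lambda^\infty_t$ satisfies the crossing property \eqref{eq:cross}. By the Helly--Bray theorem \cite[Theorem 11.1.2]{dudley_real_2002}, $F^n\to F$ at continuity points of $F$ gives $X_{0-}^n\to X_{0-}$ weakly; realizing this on a common probability space on which $X_{0-}^n\to X_{0-}$ almost surely with $B$ held fixed (for instance via the quantile coupling afforded by the monotone family $(F^n)$), the random paths $X_{0-}^n+B_\cdot-\alpha\ubar{\Lambda}^n_\cdot$ converge almost surely in $\operatorname{M1}$ to $X_{0-}+B_\cdot-\alpha\Lambda^\infty_\cdot$ — we are adding the uniformly convergent continuous paths $X_{0-}^n+B_\cdot$ to the $\operatorname{M1}$-convergent $-\alpha\ubar{\Lambda}^n$. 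Applying Lemma \ref{lem:cross} pathwise, using the crossing property of the limit, and integrating back out via Fubini and bounded convergence, one obtains $\ubar{\Lambda}^n_t=\Gamma[\ubar{\Lambda}^n;X_{0-}^n]_t\to\Gamma[\Lambda^\infty;X_{0-}]_t$ for all $t$ outside a countable set. Comparing with $\ubar{\Lambda}^n_t\to\Lambda^\infty_t$ on $J^c$ gives $\Lambda^\infty_t=\Gamma[\Lambda^\infty;X_{0-}]_t$ off a countable set, hence everywhere by right-continuity of both sides, which is \eqref{eq:limit}.

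The only genuinely delicate point, and the reason this is a corollary rather than a strengthening of Theorem \ref{thm:rc}, is the very last comparison: in Theorem \ref{thm:rc} one had $\lim_n\ubar{\Lambda}^n_t\le\ubar{\Lambda}_t$, which lets minimality of $\ubar{\Lambda}$ force the limit to equal $\ubar{\Lambda}$, whereas here $\lim_n\ubar{\Lambda}^n_t\ge\ubar{\Lambda}_t$ and no such identification is available — we can conclude only that $\Lambda^\infty$ is \emph{some} solution of \eqref{eq:McK-V}. The remaining care, exactly as in Theorem \ref{thm:rc}, lies in the fixed-point verification: one must reconcile the initial datum $X_{0-}^n$ built into $\ubar{\Lambda}^n$ with the target datum $X_{0-}$, and this is precisely what the crossing property together with the deterministic Lemma \ref{lem:cross} achieve, provided the $X_{0-}^n$ are coupled so that the lemma may be invoked path-by-path before taking expectations.
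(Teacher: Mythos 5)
Your proposal is correct and takes essentially the same approach as the paper: rerun the proof of Theorem \ref{thm:rc} with the CDF inequalities reversed, obtain the monotone pointwise limit and its right-continuous modification, verify the fixed-point identity \eqref{eq:limit} via the crossing property and Lemma \ref{lem:cross}, and observe that the reversed inequality $\ubar{\Lambda}_t\le\ubar{\Lambda}^n_t$ is exactly what prevents the final appeal to minimality. The extra detail you supply (quantile coupling, adding a uniformly convergent continuous path to an $\operatorname{M1}$-convergent one) is a reasonable way to make the pathwise application of Lemma \ref{lem:cross} explicit, and is consistent with what the paper leaves implicit.
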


\section{Convergence of Physical Solutions}\label{sec:phys}

In this section, we show that physical solutions converge to solutions which satisfy the physical jump condition. Our proof is by a similar method to \cite[Proof of Theorem 6.4]{cuchiero_propagation_2020}, except that in our case the limit is being approximated by solutions to \eqref{eq:McK-V} with converging initial conditions and free boundaries rather than by finite particle systems. Note also that $\alpha$ appears in a different location in that work.

\begin{proof}[Proof of Theorem \ref{thm:phys}]
Fix $T>0$. For $n \ge 1$ and $t \ge 0$, we let $X_t^n=X_{0-}^n+B_t- \Lambda^n_t$, $\tau^n=\inf \{t\ge 0:X_t^n\le 0\}$, and consider the subprobability measures $\nu_{t-}^n$ defined by 
\begin{align}
\nu_{t-}^n(A)=\bP (\tau^n \ge t, X_t^n\in A).
\end{align}
on $t\in [0,T]$, and $\nu_{t-}^n=0$ on $[-1,0)$.
We define $\nu_{t-}$ analogously for $X_t=X_{0-}+B_t- \Lambda_t$ and $\tau=\inf \{t\ge 0:X_t\le 0\}$, which along with $\Lambda$ give a solution to \eqref{eq:McK-V} by the reasoning in the proof of Theorem \ref{thm:rc}.
 Each of the bounded increasing functions $\Lambda, \Lambda_0, \Lambda_1,\dots$ can have at most countably many discontinuities, and we collect all of these together in a set $J$. Recalling Proposition \ref{prop:M1conv}, we have that $\Lambda^n_t\to\Lambda_t$ for all $t$ in a dense set, which we will call $E$.
 
\medskip

By \cite[Proof of Theorem 6.4, Step 2]{cuchiero_propagation_2020}, for sufficiently small $\veps>0$ for every $n\ge 1$ we have that
\begin{align}
\alpha\nu_{t-}^n([0,z+C\veps^{1/3}])\ge z(1-C\veps^{1/3})
\end{align}
whenever $t,t+\veps\in[0,T]\cap J^c\cap E$ and $z<\Lambda^n_{t+\veps}-\Lambda^n_{t}-C\veps^{1/3}$. The constant $C$ depends only on $\alpha$, $\E|X_{0-}|$, and $T$. 
Using the weak convergence of $X_{0-}^n$ to $X_{0-}$, the $\operatorname{M1}$ convergence of $\Lambda^n$ to $\Lambda$, and following \cite[Proof of Theorem 6.4, Step 1]{cuchiero_propagation_2020} we obtain
\begin{align}\label{eq:subprobn}
\lim_{n\to\infty}\nu_{t-}^n=\nu_{t-}
\end{align}
weakly at all $t\in [-1,T]\cap J^c\cap E$. Note also that 
\begin{align}\label{eq:subprobtime}
\lim_{s\uparrow t}\nu_{s-}^n=\nu_{t-}
\end{align}
by \cite[Proof of Theorem 6.4, first argument of Step 2]{cuchiero_propagation_2020}.

\medskip

Suppose now that $\Lambda_t-\Lambda_{t-}=L>0$ for some $t\in[0,T)$. Then, using the density of $J^c\cap E$ in $[-1,T]$, we may find $(t_m)_{m\ge 1}$ and $(\veps_m)_{m\ge 1}$ with $t_m,t_m+\veps_m\in J^c\cap E$ such that $t_m\uparrow t$, $t_m+\veps_m>t$, and $\veps_m\downarrow 0$. Let $z<L$. Since $\Lambda$ is c\`adl\`ag we must have $z<\Lambda_{t_m+\veps_m}-\Lambda_{t_m}-C\veps_m^{1/3}$ for sufficiently large $m$. Since $t_m,t_m+\veps_m\in J^c\cap E$, then by the definition of $E$, for any $\eta>0$ there is some $N$ such that $\Lambda_{t_m+\veps_m}-\Lambda_{t_m}-C\veps_m^{1/3}<\Lambda^n_{t+\veps_m}-\Lambda^n_{t_m}-C\veps_m^{1/3}+\eta$ for all $n\ge N$. For any $\delta>0$, taking $\eta$ smaller if necessary, using the Portmanteau theorem, along with \eqref{eq:subprobn} and \eqref{eq:subprobtime} gives
\begin{align}
\alpha\nu_{t-}([0, z+\delta])\ge \limsup_{m\to\infty}\limsup_{n\to\infty} \alpha\nu_{t_m}^n([0, z+\delta])\ge \limsup_{m\to\infty} z(1-C\veps_m^{1/3})=z.
\end{align}
Furthermore, by the argument of \cite[Proof of
Proposition 1.2]{hambly_mckean--vlasov_2018}, we must have $L\ge \inf\{x>0:\alpha\bP(\tau\ge t, X_{t-}\in(0,x])<x\}$ since $\Lambda$ is right continuous.  
\end{proof}

\section{Non-uniqueness of Physical Solutions Implies Ill-posedness of Minimal Solutions}\label{sec:nonunique}

In this final section we prove Theorem \ref{thm:nonunique}, hence showing that the minimal solution map at a given initial condition is continuous if and only if the initial condition admits a unique physical solution.

\begin{proof}[Proof of Theorem \ref{thm:nonunique}]
Let $x<0$ and let $\ubar{\Lambda}^x$ be the minimal solution to \eqref{eq:McK-V} with initial condition $X_{0-}+x$.
By Proposition \ref{prop:minisphys}, $\ubar{\Lambda}^x$ is physical. At time zero, the physical jump condition \eqref{eq:phys} yields
\begin{equation}
\begin{split}
\ubar{\Lambda}^x_0-\ubar{\Lambda}^x_{0-}&=\inf\{z>0:\alpha\bP(X_{0-}+x\in(0,z])<z\}\\
&\ge \inf\{z>0:\alpha\bP(X_{0-}\in(0,z])<z\}\\
&=\Lambda_{0}-\Lambda_{0-}.
\end{split}
\end{equation}
Consider 
\begin{align}
t^*:=\inf\{t\ge0:\ubar{\Lambda}^x_t<\Lambda_t\}
\end{align}
and suppose for the sake of contradiction that $t^*<\infty$.
First, we consider the case where 
\begin{align}\label{eq:t*iszero}
t^*=0
\end{align}
and show that this is impossible. By right-continuity of $\Lambda$, for any $\veps>0$ we can find a $\delta>0$ such that $\Lambda_t-\Lambda_0<\veps$ for all $t<\delta$. Assuming \eqref{eq:t*iszero} holds, we may always find some $t_\veps<\delta$ such that
\begin{align}\label{eq:tepsineq}
\ubar{\Lambda}^x_{t_\veps}<\Lambda_{t_\veps}.
\end{align}
On $[0,t_\veps]$, we may upper bound $\Lambda_t-\Lambda_0$ by $\veps$ and lower bound $\ubar{\Lambda}^x_t-\ubar{\Lambda}^x_0$ by zero. Taking $\veps<|x|$ and applying \eqref{eq:McK-V} gives
\begin{equation}
\begin{split}
\ubar{\Lambda}^x_{t_\veps}&\ge \alpha\bP \lb(\inf_{0\le s\le t_\veps} X_{0-}+x+B_s-(\ubar{\Lambda}^x_0-\ubar{\Lambda}^x_{0-})\le 0\rb)\\
&\ge \alpha\bP \lb(\inf_{0\le s\le t_\veps} X_{0-}+B_s-(\Lambda_0-\Lambda_{0-}+\veps)\le 0\rb)\\
&\ge \Lambda_{t_\veps},
\end{split}
\end{equation}
which contradicts \eqref{eq:tepsineq}. Therefore, we may assume that $t^*>0$.

\medskip

The goal now is to show that $\ubar{\Lambda}^x_{t^*}>\Lambda_{t^*}$, which along with the definition of $t^{*}$ will contradict the right-continuity of $\ubar{\Lambda}^x$ and $\Lambda$.
Suppose that $\ubar{\Lambda}^x_{t^*}\le\Lambda_{t^*}$ and let \newline ${\tau=\inf\{t\ge 0: X_{0-}+B_t-\Lambda_t\le 0\}}$. Then by the definition of $\Lambda$ in \eqref{eq:McK-V} we must have
\begin{equation}\label{eq:rcfail}
\begin{split}
\Lambda_{t^*-}+(\ubar{\Lambda}^x_{t^*}-\Lambda_{t^*-})&\le\alpha\bP \lb(\inf_{0\le s< t^*} X_{0-}+B_s-\Lambda_{s}\le 0\rb)\\
 &\quad\quad+\alpha\bP \bigg(\tau\ge t^*, X_{0-}+B_{t^*}\in[\Lambda_{t*-},\ubar{\Lambda}^x_{t^*}]\bigg),
\end{split}
\end{equation}
which holds with equality when $\ubar{\Lambda}^x_{t^*}=\Lambda_{t^*}$, and with possible inequality when $\ubar{\Lambda}^x_{t^*}<\Lambda_{t^*}$ by applying the physical jump condition \eqref{eq:phys} to $\Lambda$ at time $t^*-$. Consider the following augmented path defined on $[0,t^*]$:
\begin{align}
\wt{\Lambda}_t=
\begin{cases}
\Lambda_t, & t\in [0,t^*),\\
\ubar{\Lambda}^x_{t^*}, & t=t^*.
\end{cases}
\end{align}
We see that the right hand side of \eqref{eq:rcfail} is equal to 
\begin{align}
\alpha\bP \lb(\inf_{0\le s\le t^*} X_{0-}+B_s-\wt\Lambda_{s}\le 0\rb),
\end{align}
where we note that the infimum here is taken over $[0,t^*]$ rather than $[0,t^*)$.
Note that, ${\ubar{\Lambda}^x_t\ge\Lambda_t}$ on $[0,t^*)$ by the definition of $t^*$. Therefore, $\ubar{\Lambda}^x_t\ge\wt{\Lambda}_t$ on $[0,t^*]$. Since $|x|>0$ and $t^*>0$, the following holds with strict inequality:
\begin{align}
\alpha\bP \lb(\inf_{0\le s\le t^*} X_{0-}+B_s-\wt\Lambda_{s}\le 0\rb) < \alpha\bP \lb(\inf_{0\le s\le t^*} X_{0-}+x+B_s-\ubar{\Lambda}^x_{s}\le 0\rb)=\ubar{\Lambda}^x_{t^*}.
\end{align}
Putting this together with \eqref{eq:rcfail} yields $\ubar{\Lambda}^x_{t^*}<\ubar{\Lambda}^x_{t^*}$. This contradiction implies that ${\ubar{\Lambda}^x_{t^*}>\Lambda_{t^*}}$, which as mentioned before, contradicts the right-continuity of $\ubar{\Lambda}^x$ and $\Lambda$ at $t^*$. Hence ${t^*=\infty}$ and we must have $\ubar{\Lambda}^x_t\ge\Lambda_t$ for all $t\ge0$. As $x$ was arbitrary, we may take the limit as $x$ goes to 0, and the inequality $\Lambda^0_t\ge\Lambda_t$ holds for all $t\ge 0$.

\medskip

Suppose now that $X_{0-}$ admits two non-equal physical solutions $\Lambda$ and $\hat{\Lambda}$. Suppose there exists some $t>0$ such that $\Lambda_t>\hat{\Lambda}_t$ (if this is not the case then we can swap the labelling on $\Lambda$ and $\hat{\Lambda}$ so that this holds). Then by the above arguments, ${\Lambda^0_t\ge\Lambda_t>\hat{\Lambda}_t\ge \ubar{\Lambda}_t}$. This shows that $\Lambda^0\neq \ubar{\Lambda}$. \qedhere

\end{proof}

\section*{Acknowledgements}

The author was partially supported by an NSERC PGS-D scholarship and a Princeton SEAS innovation research grant. The author extends gratitude to his PhD advisor, Misha Shkolnikov; as well as Hezekiah Grayer II, Scander Mustapha, and Stefan Rigger for many useful conversations.

\printbibliography

\bigskip\bigskip

\end{document}